\newdefinition{rem}{Remark}[section]
\newdefinition{theorem}{Theorem}[section]
\newdefinition{corollary}{Corollary}[section]
\newdefinition{definition}{Definition}[section]
\newdefinition{lemma}{Lemma}[section]
\newdefinition{prop}{Proposition}[section]
\numberwithin{equation}{section}
\def\bc{\begin{center}}
\def\ec{\end{center}}
\def\be{\begin{equation}}
\def\ee{\end{equation}}
\def\N{\mathbb N}
\newcommand\hdim{\dim_{\mathrm H}}
\begin{document}
\begin{frontmatter}
\title{Increasing rate of weighted product of partial quotients in continued fractions}
\author[a]{Ayreena~Bakhtawar}\ead{a.bakhtawar@unsw.edu.au}

\author[b]{Jing Feng\corref{cor1}}\ead{jingfeng0425@gmail.com}
 \address[a]{School of Mathematics and Statistics, University of New South Wales, Sydney NSW 2052, Australia.}
 \address[b]{School of Mathematics and Statistics, Huazhong University of Science and Technology, Wuhan, 430074 PR China and
LAMA UMR 8050, CNRS, Universit\'e  Paris-Est Cr\'eteil, 61 Avenue du G\'en\'eral de Gaulle, 94010 Cr\'eteil  Cedex, France }
 \cortext[cor1]{Corresponding author.}

\begin{abstract}\par
 Let $[a_1(x),a_2(x),\cdots,a_n(x),\cdots]$ be the continued fraction expansion of $x\in[0,1)$. In this paper, we study the increasing rate of the weighted product $a^{t_0}_n(x)a^{t_1}_{n+1}(x)\cdots a^{t_m}_{n+m}(x)$ ,where $t_i\in \mathbb{R}_+\ (0\leq i \leq m)$ are weights. More precisely, let $\varphi:\mathbb{N}\to\mathbb{R}_+$ be a function with $\varphi(n)/n\to \infty$ as $n\to \infty$. For any $(t_0,\cdots,t_m)\in \mathbb{R}^{m+1}_+$ with $t_i\geq 0$ and at least one $t_i\neq0 \
(0\leq i\leq m)$, the Hausdorff dimension of the set
$$\underline{E}(\{t_i\}_{i=0}^m,\varphi)=\left\{x\in[0,1):\liminf\limits_{n\to \infty}\dfrac{\log \left(a^{t_0}_n(x)a^{t_1}_{n+1}(x)\cdots a^{t_m}_{n+m}(x)\right)}{\varphi(n)}=1\right\}$$
is obtained. Under the condition that $(t_0,\cdots,t_m)\in \mathbb{R}^{m+1}_+$ with $0<t_0\leq t_1\leq \cdots \leq t_m$, we also obtain the Hausdorff dimension of the set
\begin{equation*}
\overline{E}(\{t_i\}_{i=0}^m,\varphi)=\left\{x\in[0,1):\limsup\limits_{n\to \infty}\dfrac{\log \left(a^{t_0}_n(x)a^{t_1}_{n+1}(x)\cdots a^{t_m}_{n+m}(x)\right)}{\varphi(n)}=1\right\}.
\end{equation*}

\end{abstract}
\begin{keyword}
 Continued fractions, Hausdorff dimension, Product of partial quotients
\MSC[2010] Primary 11K50, Secondary 37E05, 28A80
\end{keyword}
\end{frontmatter}

\section{Introduction}
Each irrational number $x\in[0,1)$ admits a unique continued fraction expansion of the from
\begin{equation}\label{cf11}
x=\frac{1}{a_{1}(x)+\displaystyle{\frac{1}{a_{2}(x)+\displaystyle{\frac{1}{
a_{3}(x)+{\ddots }}}}}},\end{equation}
where
for each $n\geq 1$, the positive integers $a_{n}(x)$ are known as the partial quotients of $x.$ The partial quotients can be generated by using
the Gauss transformation $T:[0,1)\rightarrow [0,1)$ defined as
\begin{equation} \label{Gmp}
T(0):=0 \ {\rm} \ {\rm and} \ T(x)=\frac{1}{x} \ ({\rm mod} \ 1), \text{ for } x\in (0,1).
\end{equation}
In fact, let $a_{1}(x)=\big\lfloor \frac{1}{x}\big\rfloor $ (where $\lfloor \cdot \rfloor$ denotes the integral part of real number). Then $a_{n}(x)=\big\lfloor \frac{1}{T^{n-1}(x)}\big\rfloor$ for $n\geq 2$. Sometimes \eqref{cf11} is written as
$x= [a_{1}(x),a_{2}(x),a_{3}(x),\ldots ].$
Further, the $n$-th convergent $p_n(x)/q_n(x)$ of $x$ is defined by $p_n(x)/q_n(x)=[a_{1}(x),a_{2}(x),\ldots, a_n(x)].$

The metrical aspect of the theory of continued fractions has been very well studied due to its close connections with Diophantine approximation.
For example, for any $\tau>0$ the famous Jarn\'{i}k-Besicovitch set
\begin{equation*}
{J}_\tau:=\left\{ x\in \lbrack 0,1):  \left|x-\frac pq\right|  <\frac{1}{q^{\tau+2}}    \ \
\mathrm{for\ infinitely\ many\ }(p,q)\in \mathbb{Z} \times \mathbb{N}\right\},
\end{equation*}
can be described by using continued fractions. In fact,
\begin{equation}\label{Jset}
{J}_\tau=\left\{ x\in [ 0,1):a_{n+1}(x)\geq q^{\tau}_{n}(x)\ \
\mathrm{for\ infinitely\ many\ }n\in \mathbb{N}\right\}.
\end{equation}
For further details about this connection we refer to \cite{Go_41}.
Thus the growth rate of the partial quotients reveals how well a real
number can be approximated by rationals.

The well-known Borel-Bernstein Theorem \cite{Be_12,Bo_12} states that for Lebesgue almost all $x\in[0,1),$
$a_{n}(x)\geq\varphi(n)$ holds for finitely many $n^{\prime}s$ or infinitely many $n^{\prime}s$ according to the convergence or divergence of  the series $\sum_{n=1}^{\infty}{1}/{\varphi(n)}$ respectively. However, for rapidly growing function ${\varphi},$ the Borel-Bernstein Theorem does not give any conclusive information other than Lebesgue measure zero. To distinguish the sizes of zero Lebesgue measure sets, Hausdorff dimension is considered as an appropriate conception and has gained much importance in the metrical theory of continued fractions.
Jarn\'{i}k \cite{Ja_32} proved that the set of real numbers with bounded partials quotients has full Hausdorff dimension.
Later on, Good \cite{Go_41} showed that the Hausdorff dimension of the set of numbers whose partial quotients tend to infinity is one half.

After that, a lot of
work has been done in the direction of improving Borel-Bernstein Theorem, for example, the Hausdorff dimension of sets when partial quotients $a_n(x)$ obeys different conditions
has been obtained in \cite{FaLiWaWu_13,FaLiWaWu_09,FaMaSo_21,Luczak,LiRa_016,LiRa_16,WaWu_008}.

Motivation for studying the growth rate of the products of consecutive partial quotients aroses from the works of Davenport-Schmidt \cite{DaSc_70} and Kleinbock-Wadleigh \cite{KlWa_18} where they considered improvements to Dirichlet's theorem.
Let $\psi :[t_{0},\infty
)\rightarrow \mathbb{R}_{+}$ be a monotonically decreasing function, where
$t_{0}\geq 1$ is fixed. Denote by $D(\psi )$ the set of all real numbers $
x$ for which the system
        \begin{equation*}
|qx-p|\leq \psi (t)\ \rm{and}  \ |q|<t
\end{equation*}
has a nontrivial integer solution for all large enough $t$. A real number $
x\in D(\psi )$ (resp. $x\in D(\psi )^{c}$) will be referred to as \emph{$
\psi$-Dirichlet improvable} (resp. \emph{$\psi$-Dirichlet non-improvable})
number.

The starting point for the work of Davenport-Schmidt \cite{DaSc_70} and Kleinbock-Wadleigh {\protect\cite[Lemma 2.2]{KlWa_18}} is an observation that Dirichlet improvability is equivalent to a condition on the growth rate of the products of two consecutive partial quotients. Precisely, they observed that
\begin{align*}
x\in D(\psi) &\Longleftrightarrow |q_{n-1}x-p_{n-1}|\le \psi(q_n),\text{ for all } n \text{ large } \\
&\Longleftrightarrow [a_{n+1}, a_{n+2},\cdots]\cdot [a_n, a_{n-1},\cdots, a_1]\ge ((q_n\psi(q_n))^{-1}-1), \text{ for all } n \text{ large. }
\end{align*}
Then
\begin{multline*}
\Big\{x\in [0,1)\colon a_n(x)a_{n+1}(x)\ge ((q_n(x)\psi(q_n(x)))^{-1}-1)^{-1} \ {\text{for i.m.}}\ n\in \N\Big\}\subset D^{\mathsf{c}}(\psi)\\
\subset \Big\{x\in [0,1)\colon a_n(x)a_{n+1}(x)\ge 4^{-1}((q_n(x)\psi(q_n(x)))^{-1}-1)^{-1}\ {\text{for i.m.}}\ n\in \N\Big\},
\end{multline*}
where i.m. stands for infinitely many.

In other words, a real number $x\in [0, 1)\setminus\mathbb{Q}$ is $\psi$-Dirichlet improvable if and only if the products of consecutive partial quotients of $x$ do not grow quickly. We refer the reader to \cite{Ba_20,BaHuKlWa_22,FeXu_21,HuaWu_19,HuWuXu_19} for more metrical results related with the set of Dirichlet non-improvable numbers.

%




As a consequence of  Borel-Bernstein Theorem, for almost all $x\in[0,1)$ there exists a subsequence of partial quotients tending to infinity with a linear speed. In other words, for Lebesgue almost every $x\in[0,1)$
\begin{equation*}
\limsup_{n\to\infty}\frac{\log a_{n}(x)}{\log n}=1.
\end{equation*}

Taking inspirations from the study of the growth rate of the products of consecutive partial
quotients for the real numbers, in this paper we consider the growth rate of the products of the consecutive weighted partial quotients. More precisely, by \cite[Theorem 1.4]{BaHuKlWa_22}, we have for Lebesgue almost all $x\in[0,1)$
\begin{equation}\label{ES2}
\limsup_{n\to\infty}\frac{\log a^{t_0}_{n}(x)a^{t_1}_{n+1}(x)\cdots a^{t_{m}}_{n+m}(x)}{\log n^{t_{\max}}}=1,
\end{equation}
where $t_{\max}=\max\{t_i:0\leq i\leq m\}$.
This paper is concerned with Hausdorff dimension of
some exceptional sets of \eqref{ES2}. Let $\varphi: \mathbb{N}\to \mathbb{R}_+$ be a function satisfying  $\varphi(n)/n\to \infty$ as $n\to\infty $ and let $t_i\in \mathbb{R}_+\ (0\leq i \leq m)$. Define the sets
\begin{equation*}
\overline{E}(\{t_i\}_{i=0}^m,\varphi)=\left\{x\in[0,1):\limsup\limits_{n\to \infty}\dfrac{\log \left(a^{t_0}_n(x)a^{t_1}_{n+1}(x)\cdots a^{t_m}_{n+m}(x)\right)}{\varphi(n)}=1\right\},
\end{equation*}
and
\begin{equation*}
\underline{E}(\{t_i\}_{i=0}^m,\varphi)=\left\{x\in[0,1):\liminf\limits_{n\to \infty}\dfrac{\log \left(a^{t_0}_n(x)a^{t_1}_{n+1}(x)\cdots a^{t_m}_{n+m}(x)\right)}{\varphi(n)}=1\right\}.
\end{equation*}

The study of the level sets about the growth rate of $\{a_n(x)a_{n+1}(x) : n \geq
1\}$ relative to that of $\{q_n(x) : n \geq 1\}$ was discussed in \cite{HuaWu_19}.
Let $m\geq2$ be an integer and $\Psi: \mathbb{N}\to \mathbb{R}_{+}$ be a positive function. The Lebesgue measure and the Hausdorff dimension of the set
 \begin{equation}\label{em}
\left\{x\in[0, 1): {a_{n}(x)\cdots a_{n+m}(x)}\geq \Psi(n) \ \text{for infinitely many} \ n\in \N \right\},
\end{equation}
have been comprehensively determined by Huang-Wu-Xu \cite{HuWuXu_19}.  Very recently
the results of \cite{HuWuXu_19} were generalized by Bakhtawar-Hussain-Kleinbock-Wang \cite{BaHuKlWa_22}
to a weighted generalization of the set \eqref{em}. For more details we refer the reader to
\cite{BaHuKlWa_22,HuWuXu_19}.

Our main results are as follows.
\begin{theorem}\label{foralln} Let $\varphi: \mathbb{N}\to \mathbb{R}_+$ be a function satisfying $\varphi(n)/n\to \infty$ as $n\to\infty $. Write
\begin{equation*} \log B=\limsup\limits_{n\rightarrow \infty} \frac{ \log \varphi(n)}{n}.
\end{equation*}
Assume $B\in[1,\infty]$. Then for any $(t_0,\cdots,t_m)\in \mathbb{R}^{m+1}_+$ with $t_i\geq 0$ and at least one $t_i\neq0 \
(0\leq i\leq m)$,
\begin{equation*}
\hdim \underline{E}(\{t_i\}_{i=0}^m,\varphi)=\frac{1}{1+B}.
\end{equation*}
\end{theorem}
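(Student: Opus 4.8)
The plan is to establish matching upper and lower bounds for $\hdim\underline{E}(\{t_i\}_{i=0}^m,\varphi)$, the value $\frac{1}{1+B}$ being exactly the dimension familiar from Good-type and Wang--Wu-type results governing liminf conditions on (products of) partial quotients.

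First, for the \emph{upper bound}, I would reduce the weighted product to a single partial quotient. Since at least one $t_i\neq 0$, say $t_{i_0}>0$, on the set $\underline{E}$ the condition $\liminf_n \log(a_n^{t_0}\cdots a_{n+m}^{t_m})/\varphi(n)=1$ forces, for every $\vep>0$ and all large $n$, the bound $a_n^{t_0}(x)\cdots a_{n+m}^{t_m}(x)\le e^{(1+\vep)\varphi(n)}$; combined with the pigeonhole principle this does not directly bound a single $a_{n+j}$ from below, so instead I would use the \emph{upper} inequality: $\liminf=1$ implies that for every $\vep>0$ there are infinitely many $n$ with $a_n^{t_0}(x)\cdots a_{n+m}^{t_m}(x)\le e^{(1+\vep)\varphi(n)}$, hence $\prod_{i=0}^m a_{n+i}(x)\le e^{(1+\vep)\varphi(n)/t_{\min}'}$ along those $n$ for a suitable positive constant; but more usefully, $\liminf = 1$ also gives that for all large $n$, $a_n^{t_0}(x)\cdots a_{n+m}^{t_m}(x)\ge e^{(1-\vep)\varphi(n)}$, so $\max_{0\le i\le m} a_{n+i}(x)\ge e^{(1-\vep)\varphi(n)/((m+1)t_{\max})}$ for all large $n$. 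Thus $\underline{E}\subseteq \bigcup_{j=0}^m \{x: a_{n+j}(x)\ge e^{c\varphi(n)} \text{ for all large } n\}$ for an appropriate $c>0$, and by a countable-stability and shift argument each such set is contained in $\{x: \liminf_n \log a_n(x)/\varphi(n)\ge c\}$. The Hausdorff dimension of $\{x:\liminf_n \log a_n(x)/\varphi(n)\ge c\}$ (with $\log B = \limsup \log\varphi(n)/n$, noting the factor $c$ does not affect the exponential growth rate captured by $B$) is $\frac{1}{1+B}$; this is the Wang--Wu / Good-type dimension computation, which I would invoke or reprove via a standard covering argument using $|I_n(a_1,\dots,a_n)|\asymp q_n^{-2}$ and $q_{n+1}\ge a_{n+1}q_n$.

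For the \emph{lower bound}, I would construct a Cantor-type subset of $\underline{E}$ on which the liminf equals exactly $1$. Fix a small $\vep>0$ and a large integer $M$. Along a sparse subsequence $n_k\to\infty$ one prescribes a single partial quotient, say $a_{n_k}(x)\approx e^{\varphi(n_k)/t_0}$ (using the first weight, which is the relevant normalisation since we want the product to be $\approx e^{\varphi(n_k)}$ when the neighbouring quotients stay bounded), while forcing $a_{n_k+1}(x),\dots,a_{n_k+m}(x)$ to be bounded (say $\le 2$) so that the weighted product at index $n_k$ is comparable to $e^{\varphi(n_k)}$; on all other indices $n$ one keeps $a_n(x)\le N$ for a fixed large $N$ so that the weighted product there is at most $N^{t_0+\cdots+t_m}=o(\varphi(n))$ in the exponent, ensuring the liminf is not pushed below $1$ and (by the prescribed indices) equals $1$. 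The dimension of such a set is estimated from below by the standard mass-distribution / pressure-function machinery: one puts a natural Bernoulli-type measure on the Cantor set and uses the Hölder exponent of this measure together with $q_n$-estimates. Because the prescribed large digits occur along a sparse sequence and have size $\approx e^{\varphi(n_k)}$ with $\log q_{n_k}\approx$ a constant times $\varphi(n_k)$ and $\log\varphi(n_k)/n_k\to\log B$, the local dimension computation yields the bound $\frac{1}{1+B}-O(\vep)$, and letting $\vep\to 0$ (and $N\to\infty$) gives $\hdim\underline{E}\ge\frac{1}{1+B}$.

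The main obstacle I anticipate is the lower-bound construction: one must simultaneously (i) keep the liminf from dropping below $1$, which constrains \emph{all} indices $n$, not just the sparse prescribed ones — this forces a uniform cap $a_n\le N$ everywhere else and one must check this cap does not cost dimension in the limit $N\to\infty$; and (ii) arrange the sparse indices $n_k$ and the digit sizes so that the liminf is \emph{exactly} $1$ and not larger, which requires the prescribed product to be $e^{(1+o(1))\varphi(n_k)}$ precisely and the spacing of $n_k$ to be dense enough in a multiplicative sense. Handling the case $B=\infty$ (where the answer is $0$) and $B=1$ (where it is $\frac12$) as limiting/degenerate cases of the same argument, and verifying that the weights $(t_0,\dots,t_m)$ enter only through harmless constants that disappear in the exponential-rate comparison with $B$, are the remaining technical points; I would treat $B=\infty$ separately by a direct covering argument showing the set is contained in the union over $j$ of sets of arbitrarily small dimension.
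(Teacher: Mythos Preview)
Your proposal has genuine gaps in both directions.

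\textbf{Upper bound.} The inclusion $\underline{E}\subseteq \bigcup_{j=0}^m \{x: a_{n+j}(x)\ge e^{c\varphi(n)}\text{ for all large }n\}$ is false: from $\max_{0\le i\le m} a_{n+i}(x)\ge e^{c\varphi(n)}$ for every large $n$ one cannot extract a \emph{fixed} index $j$ that works for all large $n$. For instance, with $m=1$, $t_0=t_1=1$, take $a_{2k}$ enormous and $a_{2k+1}=1$; then $a_n a_{n+1}$ is large for every $n$, yet neither $a_n$ nor $a_{n+1}$ is large for all large $n$. The paper sidesteps this by not reducing to a single quotient in the ``for all $n$'' regime. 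Instead it first proves (Proposition~3.1) that the set $ND'(\psi)=\{x:\prod_i a_{n+i}^{t_i}\ge\psi(n)\text{ for large }n\}$ has dimension $\frac{1}{1+A}$ with $\log A=\limsup_n\frac{\log\log\psi(n)}{n}$: for $A>1$ one uses that $\psi(n)\ge e^{(A-\vep)^n}$ \emph{infinitely often}, and along those $n$ pigeonhole on the finitely many indices does give a single-quotient ``infinitely many'' inclusion to which {\L}uczak applies; for $A=1$ a direct Bernoulli-measure covering argument on the product is needed. Then $\underline{E}\subset ND'(e^{(1-\vep)\varphi})$ yields the bound.

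\textbf{Lower bound.} Your construction is designed for the wrong set. If $a_n\le N$ off a sparse subsequence $\{n_k\}$, then for those indices the weighted product is at most $N^{t_0+\cdots+t_m}$, so $\log\big(a_n^{t_0}\cdots a_{n+m}^{t_m}\big)/\varphi(n)\to 0$ along them and hence the $\liminf$ equals $0$, not $1$; your Cantor set is disjoint from $\underline{E}$. (What you have sketched is essentially the lower-bound construction for the $\limsup$ set $\overline{E}$.) For $\underline{E}$ every window of length $m+1$ must contain a large quotient, and the paper achieves this by making \emph{all} quotients large: it builds a sequence $L_n\ge e^{\varphi(n)}$ with $L_n\le L_{n+1}\le L_n^{B+\vep}$ and $\liminf_n\log L_n/\varphi(n)=1$, sets $Z:=\liminf_n \log(L_n^{t_0}\cdots L_{n+m}^{t_m})/\varphi(n)\in(0,\infty)$, and takes $\{x:\lfloor L_n^{1/Z}\rfloor\le a_n(x)<2\lfloor L_n^{1/Z}\rfloor\text{ for all }n\}\subset\underline{E}$. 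The Fan--Liao--Wang--Wu formula then gives dimension $\bigl(2+\limsup_n \frac{\log L_{n+1}}{\log(L_1\cdots L_n)}\bigr)^{-1}\ge \frac{1}{1+B+\vep}$.
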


\begin{theorem}\label{forinfinitelymanyn}
Let $\varphi: \mathbb{N}\to \mathbb{R}_+$ be a function satisfying $\varphi(n)/n\to \infty$ as $n\to\infty $.
Write
\begin{equation*} \log b=\liminf\limits_{n\rightarrow \infty} \frac{ \log \varphi(n)}{n}.
\end{equation*}
Assume $b\in[1,\infty]$. Then for any $(t_0,t_1,\cdots,t_m)\in \mathbb{R}_+^{m+1}$ with $0<t_0\leq t_1\leq \cdots \leq t_m$, we have
\begin{equation*}
\hdim \overline{E}(\{t_i\}_{i=0}^m,\varphi)=\frac{1}{1+b}.
\end{equation*}

\end{theorem}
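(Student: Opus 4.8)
The plan is to prove the two inequalities $\hdim \overline{E}(\{t_i\}_{i=0}^m,\varphi)\le \tfrac{1}{1+b}$ and $\hdim \overline{E}(\{t_i\}_{i=0}^m,\varphi)\ge \tfrac{1}{1+b}$ separately. Write $P_n(x)=\prod_{i=0}^m a_{n+i}(x)^{t_i}$, so that $\log P_n(x)=\sum_{i=0}^m t_i\log a_{n+i}(x)$, and note that $x\in\overline E$ exactly when, for every $\varepsilon>0$, one has $\log P_n(x)\le(1+\varepsilon)\varphi(n)$ for all large $n$ and $\log P_n(x)\ge(1-\varepsilon)\varphi(n)$ for infinitely many $n$. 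I will use the standard metric facts $|I_n(a_1,\dots,a_n)|\asymp q_n^{-2}$ with $\prod_{j\le n}a_j\le q_n\le 2^n\prod_{j\le n}a_j$, and the ``bundle'' estimate that for $T\ge1$ the set $\{x\in I_n(a_1,\dots,a_n):a_{n+1}(x)\ge T\}$ is an interval of length $\asymp|I_n(a_1,\dots,a_n)|/T$. The ordering $0<t_0\le\cdots\le t_m$ enters through the remark that (since $\varphi$ is essentially increasing) $\varphi(j-i)/t_i$ is smallest at $i=m$; hence $\log P_k\le(1+\varepsilon)\varphi(k)$ for all large $k$ forces $a_j\le\exp\big((1+\varepsilon)\varphi(j-m)/t_m\big)$ for all large $j$, while $\log P_n\ge(1-\varepsilon)\varphi(n)$ is realised most cheaply (in cylinder length) by enlarging the single quotient $a_{n+m}$. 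This is also why the answer does not involve the $t_i$: replacing $\varphi$ by $\widetilde\varphi(n)=\varphi(n-m)/t_m$ leaves $\liminf_n\log\widetilde\varphi(n)/n=\log b$ unchanged.

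\emph{Lower bound.} Assume first $1<b<\infty$; fix small $\delta>0$, pick $b'\in(1,b)$ with $b'\ge(1-\delta)b$, and (using $\liminf_n\log\varphi(n)/n=\log b$) choose a subsequence $(N_k)$ with $\varphi(N_k)\le(b+\delta)^{N_k}$, which after passing to a further subsequence we may take to lie in one residue class. I would build a Cantor set $F_\delta\subseteq\overline E$ by prescribing $\log a_j$ to lie in $[w_j,2w_j)$ for all large $j$, where the ``branching profile'' $(w_j)$ is chosen so that (i) $\sum_{i=0}^m t_i w_{n+i}\le\varphi(n)$ for all large $n$, whence $\log P_n\le(1+\varepsilon)\varphi(n)$; (ii) $w_{j+1}/w_j\to b'$ and $\sum_{j\le J}w_j\asymp\frac{b'}{b'-1}w_J$; and (iii) $\log P_{N_k}/\varphi(N_k)\to1$, so that $\limsup_n\log P_n/\varphi(n)=1$ and $F_\delta\subseteq\overline E$ (in the regular case one simply takes $w_j=\varphi(j)/\sum_i t_i b^i$). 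Then I would estimate $\hdim F_\delta$ from below by the mass distribution principle for the uniform measure $\mu$ on $F_\delta$: at cylinders the exponent $\log\mu(I_n)/\log|I_n|$ equals $\tfrac12$, while at the bundle interval just above a prescribed level $J+1$ it equals $\big(2+w_{J+1}/\sum_{j\le J}w_j\big)^{-1}$, which by (ii) tends to $\big(2+(b'-1)\big)^{-1}=\tfrac{1}{1+b'}$; letting $\delta\to0$ yields $\hdim\overline E\ge\tfrac1{1+b}$. When $b=1$ the profile grows sub-geometrically so this exponent is $\tfrac12$, and when $b=\infty$ there is nothing to prove. (When the $\liminf$ and $\limsup$ of $\log\varphi(n)/n$ agree, this is essentially the construction behind Theorem~\ref{foralln}.)

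\emph{Upper bound.} Fix $\varepsilon>0$ and use $\overline E\subseteq G_\varepsilon\cap H_\varepsilon$ with $G_\varepsilon=\{x:\log P_n(x)\ge(1-\varepsilon)\varphi(n)\text{ i.m. }n\}$ and $H_\varepsilon=\{x:\log P_n(x)\le(1+\varepsilon)\varphi(n)\text{ for all large }n\}$. On $H_\varepsilon$ one has $a_j(x)\le\Theta(j):=\exp\big((1+\varepsilon)\varphi(j-m)/t_m\big)$ for large $j$, and by pigeonhole $G_\varepsilon=\bigcup_{i^\ast=0}^m G_\varepsilon^{(i^\ast)}$ where on $G_\varepsilon^{(i^\ast)}$ we have $a_{n+i^\ast}(x)\ge U_n:=\exp\big((1-\varepsilon)\varphi(n)/((m+1)t_m)\big)$ for infinitely many $n$. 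For each $i^\ast$ I would cover $\overline E\cap G_\varepsilon^{(i^\ast)}$, for arbitrarily large $N$, by the bundles $\{x\in I(a_1,\dots,a_{n+i^\ast-1}):a_{n+i^\ast}(x)\ge U_n\}$ over $n\ge N$ and over $(a_j)$ with $a_j\le\Theta(j)$, each of length $\asymp q_{n+i^\ast-1}^{-2}U_n^{-1}$. Because of the truncation by $\Theta$, the sum $\sum_{a_j\le\Theta(j)}q_{n+i^\ast-1}^{-2s}$ is finite for all $s>0$ and bounded by $\prod_{j<n+i^\ast}\Theta(j)^{1-2s}$ when $\tfrac1{1+b}<s<\tfrac12$, so the $s$-content is at most $\sum_{n\ge N}U_n^{-s}\prod_{j<n+i^\ast}\Theta(j)^{1-2s}$, which one shows tends to $0$ as $N\to\infty$ for every $s>\tfrac1{1+b}$ (using $\log U_n$ of exponential rate $\ge\log(b-\delta)$ dominating the telescoped product). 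This forces $\hdim\overline E\le\tfrac1{1+b}$; for $b=1$ the plain cylinder cover already gives the bound $\tfrac12$, and $b=\infty$ follows by letting $b$ be arbitrarily large.

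\emph{Main obstacle.} I expect the decisive difficulty to be exactly this last summability estimate. Since $\varphi$ is controlled only through its lower rate $\liminf_n\log\varphi(n)/n=\log b$, the truncation $\Theta(j)$ can be enormous where $\varphi$ spikes, so $\sum_{j<n}\log\Theta(j)$ need not be dominated by $\varphi(n)$ and the naive cover can fail for $s<\tfrac12$. The remedy is to choose the covering level adaptively, covering $x$ through the \emph{first} index $n\ge N$ at which $\log P_n(x)$ reaches $(1-\varepsilon)\varphi(n)$: its minimality bounds the earlier quotients along a range over which the relevant values of $\varphi$ are genuinely comparable to $(b\pm\delta)^{(\cdot)}$, so the product telescopes and the required decay is recovered. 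The same oscillation issue is what makes the branching profile $(w_j)$ in the lower bound delicate, since it must lie below $\varphi$, match $\varphi$ on the chosen subsequence, and retain geometric rate $b$; carrying out this construction, the mass-distribution computation on $F_\delta$, and the summability estimate above completes the proof.
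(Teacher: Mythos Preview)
Your outline is in the right direction but diverges from the paper's proof in both halves, and the obstacles you flag are precisely where the paper either takes a shortcut or supplies a device you have not.

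\textbf{Upper bound.} The paper does not attempt any covering argument here: it simply observes $\overline E\subseteq\{x:P_n(x)\ge e^{(1-\varepsilon)\varphi(n)}\text{ for infinitely many }n\}$ and invokes \cite[Theorem~1.5]{BaHuKlWa_22}, which already gives the Hausdorff dimension of that limsup set as $1/(1+b)$. Your route, by contrast, has two problems. First, the pigeonhole step $a_{n+i^\ast}\ge U_n=\exp\big((1-\varepsilon)\varphi(n)/((m{+}1)t_m)\big)$ loses a factor $m{+}1$ that the truncation $\Theta$ does not recover: even in the regular case $\varphi(n)=b^n$, your summability criterion $\sum_n U_n^{-s}\prod_{j}\Theta(j)^{1-2s}<\infty$ holds only for $s>(m{+}1)/(b{+}2m{+}1)$, which for $m\ge1$ and $b>1$ is strictly larger than $1/(1{+}b)$. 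Second, the adaptive ``first index $n\ge N$'' remedy does not obviously help with oscillation: minimality of $n$ bounds the products $P_k(x)$ for $N\le k<n$, hence $a_j\le\exp((1-\varepsilon)\varphi(j-m)/t_m)$ on that range, but this has the same shape as the $H_\varepsilon$ truncation and still allows $\sum_{j<n}\log\Theta(j)$ to be dominated by spikes of $\varphi$ away from the subsequence realising the $\liminf$.

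\textbf{Lower bound.} Here your plan is close in spirit to the paper's, but the paper resolves the oscillation issue by two concrete devices missing from your sketch. First, it replaces $\varphi$ by its nondecreasing minorant $\Phi(n)=\min_{k\ge n}\varphi(k)$; second, it defines the profile recursively by
\[
c_{n+m}^{t_m}=\min\Bigl\{e^{\Phi(n)}\big/\bigl(c_n^{t_0}\cdots c_{n+m-1}^{t_{m-1}}\bigr),\ (c_1\cdots c_{n+m-1})^{t_m(b+\varepsilon-1)}\Bigr\},
\]
so that $\log c_{n+1}/\log(c_1\cdots c_n)\le b+\varepsilon-1$ is built in, while the hypothesis $t_0\le\cdots\le t_m$ together with the monotonicity of $\Phi$ guarantees $c_n\ge1$. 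One then shows, by contradiction with $\liminf_n\log\varphi(n)/n=\log b$, that the first alternative in the minimum occurs infinitely often and that among those indices $\Phi=\varphi$ infinitely often, giving $\limsup_n\log(c_n^{t_0}\cdots c_{n+m}^{t_m})/\varphi(n)=1$; a slowly growing additive perturbation then forces the sequence to infinity so that Lemma~\ref{anyset} applies. Your target of a profile $(w_j)$ with $w_{j+1}/w_j\to b'$ that matches $\varphi$ on a subsequence is the right one, but without the minorant $\Phi$ and the recursive $\min$ you have no mechanism to keep $\sum_i t_iw_{n+i}\le\varphi(n)$ at the indices where $\varphi$ dips while retaining geometric rate $b$.
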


\begin{rem}
  Note that in Theorem \ref{forinfinitelymanyn} we are only able to treat the case when the sequence
$\{t_{i}\}_{i=0}^{m}$ is nondecreasing. We would like to drop this monotonic condition. Indeed, our method for the upper bound is true for all sequences $\{t_{i}\}_{i=0}^{m}$. However, when dealing with the lower bound, the sequence $\{c_n\}_{n\geq1}$ we construct (see the proof for details) might not be bounded away from 0 once we drop the monotonic condition, which is important in constructing a suitable subset of $\overline{E}(\{t_i\}_{i=0}^m,\varphi)$.
\end{rem}

\section{Preliminaries}
In this section, we fix some notations and recall some known results in theory of continued fraction expansions.

For an irrational number $x\in[0,1)$, recall  $a_n(x) $ is the $n$-th partial quotient of $x$ in its continued fraction expansion. The sequences $\{p_n(x)\}_{n\geq1},$ $\{q_n(x)\}_{n\geq1}$ are the numerator and denominator of the $n$-th convergent of $x$. It is well-known that $\{p_n(x)\}_{n\geq1}$ and $\{q_n(x)\}_{n\geq1}$ can be obtain by the following recursive relations (see \cite{Kh_63}):

\begin{equation}\label{sequence-qn}
\begin{cases}
&p_n(x)=a_n(x)p_{n-1}(x)+p_{n-2}(x),\ \text{for any $n \geq $ 1 };\\
&q_n(x)=a_n(x)q_{n-1}(x)+q_{n-2}(x),\ \text{for any $n \geq $ 1 },
\end{cases}
\end{equation}
with the conventions $p_{-1}=1,\  q_{-1}=0, \  p_0=0$ and $q_0=1$.

For any $n$-tuple $(a_1,\cdots,a_n)\in \mathbb{N}^n$ with $n\geq 1$, we call
$$I_n(a_1,\cdots,a_n)=\big\{x\in[0,1)\colon a_1(x)=a_1,\ a_2(x)=a_2, \cdots ,a_n(x)=a_n\big\},$$
a cylinder of order $n$.

Note that $p_n(x)$ and $q_n(x)$ are determined by the first $n$ partial quotients of $x$. So all points in $I_n(a_1,\cdots,a_n)$ determine the same $p_n(x)$ and $q_n(x)$. Hence for simplicity, if there is no confusion, we write $a_n$, $p_n$ and $q_n$ to denote $a_n(x)$, $p_n(x)$ and $q_n(x)$ for $x\in I_n(a_1,\cdots,a_n)$ respectively.

The following lemma is a collection of basic facts on continued fractions which can be found in the book of Khintchine \cite{Kh_63}.

\begin{lemma}
For any $(a_1,\cdots,a_n)\in \mathbb{N}^n$, let $q_n$ and $p_n$ be given recursively by \eqref{sequence-qn}. Then

(1)\begin{equation*}
I_n(a_1,\cdots,a_n)=\begin{cases}
&\Big[\dfrac{p_n}{q_n},\dfrac{p_n+p_{n-1}}{q_n+q_{n-1}}\Big),  \text{ if } n \text{ is even },\\
&\Big[\dfrac{p_n+p_{n-1}}{q_n+q_{n-1}},\dfrac{p_n}{q_n}\Big),  \text{ if } n \text{ is odd };
\end{cases}
\end{equation*}

(2) $q_n\geq 2^{\frac{n-1}{2}},\ \prod_{k=1}^na_k\leq q_n \leq  2^n\prod_{k=1}^na_k;$

(3)
\begin{equation*}
\frac{1}{3a_{n+1}q_{n}^{2}}\,<\,\Big|x-\frac{p_{n}}{q_{n}}\Big|=\frac{1}{
q_n(q_{n+1}+T^{n+1}(x)q_n)}<\,\frac{1}{a_{n+1}q_{n}^{2}},
\end{equation*}
and for any $n\geq1$ the derivative of $T^{n}$ is given by

\begin{equation*}
(T^{n})^{\prime }(x)=\frac{(-1)^{n}}{(xq_{n-1}-p_{n-1})^{2}}.
\end{equation*}
\end{lemma}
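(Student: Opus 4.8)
The plan is to deduce all three parts from two elementary consequences of the recursion \eqref{sequence-qn}: the \emph{determinant identity} $p_nq_{n-1}-p_{n-1}q_n=(-1)^{n-1}$ for all $n\ge0$, and the \emph{tail formula}
\[
x=\frac{p_n+T^n(x)\,p_{n-1}}{q_n+T^n(x)\,q_{n-1}}\qquad(n\ge1).
\]
The determinant identity follows by induction: substituting $p_n=a_np_{n-1}+p_{n-2}$ and $q_n=a_nq_{n-1}+q_{n-2}$ shows $p_nq_{n-1}-p_{n-1}q_n=-(p_{n-1}q_{n-2}-p_{n-2}q_{n-1})$, and the base case is checked from the conventions $p_{-1}=1,\,q_{-1}=0,\,p_0=0,\,q_0=1$. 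For the tail formula, iterating $x=1/(a_1(x)+T(x))$ gives $x=[a_1,\dots,a_{n-1},a_n+T^n(x)]$, and the recursion \eqref{sequence-qn}, applied with the real last entry $a_n+\xi$, yields $[a_1,\dots,a_{n-1},a_n+\xi]=\frac{(a_n+\xi)p_{n-1}+p_{n-2}}{(a_n+\xi)q_{n-1}+q_{n-2}}=\frac{p_n+\xi p_{n-1}}{q_n+\xi q_{n-1}}$ for every $\xi\ge0$; take $\xi=T^n(x)$.

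For part (1), fix $(a_1,\dots,a_n)$ and let $g(\xi)=\frac{p_n+\xi p_{n-1}}{q_n+\xi q_{n-1}}$. By the tail formula every $x\in I_n(a_1,\dots,a_n)$ equals $g(T^n(x))$, and since $T^n(x)$ runs through all of $[0,1)$ as the later partial quotients vary, $I_n(a_1,\dots,a_n)=g([0,1))$, up to the endpoint convention for rationals. The determinant identity gives $g'(\xi)=\frac{(-1)^n}{(q_n+\xi q_{n-1})^2}$, so $g$ is strictly monotone on $[0,1]$, increasing when $n$ is even and decreasing when $n$ is odd; as $g(0)=p_n/q_n$ and $g(1)=\frac{p_n+p_{n-1}}{q_n+q_{n-1}}$, the interval $g([0,1))$ is exactly the one displayed in each parity case (and in particular $|I_n|=\frac1{q_n(q_n+q_{n-1})}$, which is all that gets used later).

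For part (2), since every $q_k\ge0$ and every $a_k\ge1$, \eqref{sequence-qn} gives $q_n\ge q_{n-1}+q_{n-2}\ge2q_{n-2}$; iterating from $q_0=1$ and $q_1=a_1\ge1$ gives $q_n\ge2^{\lfloor n/2\rfloor}\ge2^{(n-1)/2}$. The sequence $\{q_n\}$ is nondecreasing, so $q_n=a_nq_{n-1}+q_{n-2}\ge a_nq_{n-1}$, whence $q_n\ge\prod_{k=1}^na_k$ by induction, and $q_n=a_nq_{n-1}+q_{n-2}\le(a_n+1)q_{n-1}\le2a_nq_{n-1}$, whence $q_n\le2^n\prod_{k=1}^na_k$ by induction. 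For part (3), applying the tail formula with $n$ replaced by $n+1$ and subtracting $p_n/q_n$,
\[
x-\frac{p_n}{q_n}=\frac{q_np_{n+1}-p_nq_{n+1}}{q_n\bigl(q_{n+1}+T^{n+1}(x)q_n\bigr)}=\frac{(-1)^n}{q_n\bigl(q_{n+1}+T^{n+1}(x)q_n\bigr)}
\]
by the determinant identity, which is the displayed equality. Since $0\le T^{n+1}(x)<1$, $q_{n+1}=a_{n+1}q_n+q_{n-1}$, $q_{n-1}\le q_n$ and $a_{n+1}\ge1$, we obtain $a_{n+1}q_n^2<q_n\bigl(q_{n+1}+T^{n+1}(x)q_n\bigr)<q_n(q_{n+1}+q_n)\le(a_{n+1}+2)q_n^2\le3a_{n+1}q_n^2$, which gives the two-sided bound. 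Finally, solving the tail formula for $T^n(x)$ gives $T^n(x)=\frac{p_n-xq_n}{xq_{n-1}-p_{n-1}}$, and differentiating this linear-fractional function and using the determinant identity once more gives $(T^n)'(x)=\frac{q_np_{n-1}-p_nq_{n-1}}{(xq_{n-1}-p_{n-1})^2}=\frac{(-1)^n}{(xq_{n-1}-p_{n-1})^2}$.

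The only point that is not a routine induction or a one-line M\"obius computation is the endpoint bookkeeping in part (1): to pin down the precise half-open intervals with the correct left/right orientation one has to track which endpoint of $g([0,1))$ is attained, i.e.\ handle the Lebesgue-null ambiguity in the continued fraction expansion of rationals (which representation of $p_n/q_n$ is taken as canonical). Everything else reduces mechanically to the determinant identity and the tail formula established at the outset.
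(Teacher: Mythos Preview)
Your proof is correct; it is the standard argument (determinant identity plus tail formula) that one finds in Khintchine's book and in every introductory treatment of continued fractions. The paper does not give its own proof of this lemma at all: it simply states that these are ``basic facts \dots\ which can be found in the book of Khintchine \cite{Kh_63}'' and moves on, so there is nothing to compare against beyond noting that you have supplied what the paper omits. Your caveat about the half-open endpoint in part~(1) is appropriate---the precise orientation depends on how one handles the continued fraction of rationals---and, as you observe, only the length $|I_n|=1/\bigl(q_n(q_n+q_{n-1})\bigr)$ is used downstream, so the ambiguity is harmless.
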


The next theorem, known as Legendre's Theorem, connects
1-dimensional Diophantine approximation with continued fractions.
\begin{theorem}[Legendre]\label{leg}
Let $\frac{p}{q}$ be an irreducible rational number. Then
\begin{equation*}  \label{Legendre}
\Big|x-\frac pq\Big|<\frac1{2q^2}\Longrightarrow \frac pq=\frac{p_n(x)}{
q_n(x)},\quad \mathrm{for\ some \ } n\geq 1.
\end{equation*}
\end{theorem}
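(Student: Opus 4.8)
The plan is to reconstruct the continued fraction algorithm starting from $\frac pq$ and show that it must be an initial segment of the algorithm applied to $x$; equivalently, to extend $[a_1,\dots,a_n]$ by a suitable tail so as to recover $x$ exactly. We may assume $x\neq\frac pq$, since otherwise the conclusion is trivial. As $\gcd(p,q)=1$, the rational $\frac pq$ has a finite continued fraction expansion, and by the elementary identities $[a_1,\dots,a_n]=[a_1,\dots,a_n-1,1]$ (when $a_n\ge2$) and $[a_1,\dots,a_{n-1},1]=[a_1,\dots,a_{n-1}+1]$ we may write $\frac pq=[a_1,\dots,a_n]$ with $a_i\in\mathbb{N}$ and with the length $n$ chosen so that $(-1)^n$ has the same sign as $x-\frac pq$. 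Let $p_k/q_k=[a_1,\dots,a_k]$ be the associated convergents, computed by \eqref{sequence-qn}; thus $p_n=p$ and $q_n=q$, and recall $p_kq_{k-1}-p_{k-1}q_k=(-1)^{k-1}$, which is immediate from \eqref{sequence-qn}.

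Next I would introduce the real number
\[
\beta:=-\frac{q_{n-1}x-p_{n-1}}{q_nx-p_n},
\]
which is well defined because $x\neq\frac pq$ and which satisfies $x=\dfrac{\beta p_n+p_{n-1}}{\beta q_n+q_{n-1}}$. A direct computation then gives
\[
x-\frac pq=\frac{(-1)^n}{q\,(\beta q+q_{n-1})}.
\]
By the choice of parity the right-hand side has the same sign as $x-\frac pq$, hence $\beta q+q_{n-1}>0$ and $\bigl|x-\tfrac pq\bigr|=\dfrac1{q(\beta q+q_{n-1})}$. Combining this with the hypothesis $\bigl|x-\tfrac pq\bigr|<\dfrac1{2q^2}$ yields $\beta q+q_{n-1}>2q$, and since $0\le q_{n-1}\le q_n=q$ (because $q_n=a_nq_{n-1}+q_{n-2}$ with $a_n\ge1$, $q_{n-2}\ge0$) we obtain $\beta q>2q-q_{n-1}\ge q$, i.e.\ $\beta>1$.

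Finally, because $\beta>1$ it admits a continued fraction expansion $\beta=[a_{n+1},a_{n+2},\dots]$ with integer partial quotients $a_j\ge1$ and $a_{n+1}=\lfloor\beta\rfloor$. Substituting this back into $x=\dfrac{\beta p_n+p_{n-1}}{\beta q_n+q_{n-1}}$ gives $x=[a_1,\dots,a_n,a_{n+1},a_{n+2},\dots]$, which is the continued fraction expansion of $x$; reading off its $n$-th convergent shows $p_n(x)/q_n(x)=[a_1,\dots,a_n]=\frac pq$, as required. I expect the one genuinely delicate point to be the parity adjustment in the first step: if $n$ had the wrong parity, the sign in the displayed formula for $x-\frac pq$ would be reversed, $\beta q+q_{n-1}$ could fail to be positive, and the deduction $\beta>1$ would break down — so the non-uniqueness of finite continued fraction expansions is used in an essential way.
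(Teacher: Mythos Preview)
Your argument is correct and is the classical proof of Legendre's theorem: choose the parity of the finite continued fraction representation of $\frac{p}{q}$ so that the sign of $(-1)^n$ matches that of $x-\frac{p}{q}$, introduce the tail $\beta$, use the identity $x-\frac{p}{q}=\dfrac{(-1)^n}{q(\beta q+q_{n-1})}$ together with the hypothesis to force $\beta>1$, and then expand $\beta$ to obtain the continued fraction of $x$ with $n$-th convergent equal to $\frac{p}{q}$. All steps are sound, including the parity adjustment you flag as the delicate point.

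There is nothing to compare against, however: the paper does not prove this statement. It is quoted as the classical theorem of Legendre (with a reference to Khintchine's book \cite{Kh_63} for background on continued fractions) and is included only as motivation for why continued fractions govern Diophantine approximation; it is not used in the proofs of Theorems~\ref{foralln} or~\ref{forinfinitelymanyn}. Your write-up is a self-contained proof of a result the paper merely cites.
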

According to Legendre's theorem if an irrational $x$ is ``well" approximated
by a rational $p/q$, then this rational must be a convergent of $x$. So, the continued fraction expansions is a quick and efficient tool for finding good rational approximations to real numbers.
For more basic properties of continued fraction expansions, one can refer to \cite{Kh_63}. We also give some auxiliary results on the Hausdorff dimension theory of continued fractions that will be used later.

 \begin{lemma}[\cite{FaLiWaWu_13}]\label{anyset}
Let $\{s_n\}_{n\geq1}$ be a sequence of positive integers tending to infinity, then for any positive integer number $N\geq2$,
\begin{equation*}
\begin{aligned}
&\hdim \big\{x\in[0,1): s_n\leq a_n(x)< Ns_n,\  \text{for all}\ n \geq 1\big\}\\
=&\liminf_{n\to\infty}\frac{\log (s_1s_2\cdots s_n)}{2\log (s_1s_2\cdots s_n)+\log s_{n+1}}\\
=&\frac{1}{2+\limsup\limits_{n\to\infty}\frac{\log s_{n+1}}{\log (s_1s_2\cdots s_n)}}.
\end{aligned}
\end{equation*}
\end{lemma}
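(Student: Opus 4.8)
The plan is to prove the two inequalities $\hdim E\le\theta$ and $\hdim E\ge\theta$ for the set $E:=\{x\in[0,1):s_n\le a_n(x)<Ns_n\text{ for all }n\ge1\}$, where $\theta:=\liminf_{n\to\infty}\frac{P_n}{2P_n+\log s_{n+1}}$ and $P_n:=\log(s_1\cdots s_n)$; the identity $\theta=\big(2+\limsup_n(\log s_{n+1}/P_n)\big)^{-1}$ is then just the elementary fact that $\frac{P_n}{2P_n+\log s_{n+1}}=\big(2+\log s_{n+1}/P_n\big)^{-1}$ together with the monotonicity of $t\mapsto(2+t)^{-1}$. A point used throughout is that $s_n\to\infty$ forces $P_n/n\to\infty$, i.e. $n=o(P_n)$; hence the multiplicative factors $e^{O(n)}$ in the continued-fraction estimates recalled above ($\prod_{k\le n}a_k\le q_n\le 2^n\prod_{k\le n}a_k$ and $|I_n|\asymp q_n^{-2}$) are asymptotically negligible against $e^{P_n}$, so the final answer will not depend on $N$.

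For the upper bound I would cover $E$ at scale $n$ by intervals slightly smaller than order-$n$ cylinders. Given an admissible prefix $(a_1,\dots,a_n)$ (meaning $s_k\le a_k<Ns_k$ for $k\le n$), let $J_n(a_1,\dots,a_n)$ be the smallest interval containing every $I_{n+1}(a_1,\dots,a_n,a_{n+1})$ with $s_{n+1}\le a_{n+1}<Ns_{n+1}$. Since $T^n$ maps $I_n(a_1,\dots,a_n)$ bijectively onto $[0,1)$ with $|(T^n)'|\asymp q_n^2$ on it (bounded distortion, from the formula for $(T^n)'$ above) and $\{y:s_{n+1}\le a_1(y)<Ns_{n+1}\}=\big(\tfrac1{Ns_{n+1}},\tfrac1{s_{n+1}}\big]$ has length $\asymp s_{n+1}^{-1}$, we get $|J_n|\asymp q_n^{-2}s_{n+1}^{-1}\le C(s_1\cdots s_n)^{-2}s_{n+1}^{-1}$. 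There are exactly $(N-1)^n\,s_1\cdots s_n$ admissible prefixes of length $n$ and $E\subseteq\bigcup J_n$, so for $d>0$ the $d$-sum over this cover is at most $(N-1)^n(s_1\cdots s_n)\cdot C^d(s_1\cdots s_n)^{-2d}s_{n+1}^{-d}$, with logarithm $n\log(N-1)+d\log C+P_n-d(2P_n+\log s_{n+1})$. Given $d>\theta$, choosing a subsequence $n_j$ along which $\frac{P_{n_j}}{2P_{n_j}+\log s_{n_j+1}}\to\theta$ makes this tend to $-\infty$ (using $n_j=o(P_{n_j})$) while the meshes tend to $0$; hence $\mathcal H^d(E)=0$ and $\hdim E\le d$, and letting $d\downarrow\theta$ gives $\hdim E\le\theta$.

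For the lower bound I would apply the mass distribution principle to the uniform measure $\mu$ on $E$ with $\mu(I_n(a_1,\dots,a_n))=\big((N-1)^n s_1\cdots s_n\big)^{-1}=:m_n$ for admissible prefixes (each admissible order-$n$ cylinder having exactly $(N-1)s_{n+1}$ admissible children, which split its mass equally). For $x\in E$ one has nested intervals $I_n(x)\supset J_n(x)\supset I_{n+1}(x)$ with $|I_n(x)|\asymp q_n^{-2}$, $|J_n(x)|\asymp q_n^{-2}s_{n+1}^{-1}$, $|I_{n+1}(x)|\asymp(s_{n+1}q_n)^{-2}$, and $\mu(I_n(x))=\mu(J_n(x))=m_n$, $\mu(I_{n+1}(x))=m_{n+1}=m_n/((N-1)s_{n+1})$. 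A ball $B(x,r)$ is estimated by locating $r$: if $|J_n(x)|\le r<|I_n(x)|$ then $B(x,r)$ meets $I_n(x)$ and at most one neighbouring order-$n$ cylinder, of $\mu$-mass $\le m_n$, so $\mu(B(x,r))\le Cm_n$; if $|I_{n+1}(x)|\le r<|J_n(x)|$ then $B(x,r)$ lies essentially inside $J_n(x)$, which is tiled by the $(N-1)s_{n+1}$ admissible children of comparable length $\asymp|I_{n+1}(x)|$, so $B(x,r)$ meets $O\big(r/|I_{n+1}(x)|+1\big)$ of them and $\mu(B(x,r))\le C\big(r\,m_{n+1}/|I_{n+1}(x)|+m_{n+1}\big)$. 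Inserting the asymptotics $|\log m_n|\sim P_n$, $|\log|I_n(x)||\sim 2P_n$, $|\log|J_n(x)||\sim 2P_n+\log s_{n+1}$, $|\log|I_{n+1}(x)||\sim 2P_{n+1}$, a short computation shows that in every regime the governing condition reduces to $d\le\frac{P_n}{2P_n+\log s_{n+1}}$, hence $\mu(B(x,r))\le C_d\,r^{d}$ for all sufficiently small $r$ whenever $d<\theta$ is fixed. The mass distribution principle then gives $\hdim E\ge d$ for every $d<\theta$, so $\hdim E\ge\theta$, and together with the upper bound this proves the lemma.

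The main obstacle is the geometric bookkeeping in the lower-bound case analysis: one must verify that the admissible children of $I_n(x)$ genuinely cluster into a subinterval $J_n(x)$ separated from $\partial I_n(x)$ (so a ball of radius $<|I_n(x)|$ cannot reach admissible cylinders under a different order-$n$ parent), that all admissible children have length $\asymp|I_{n+1}(x)|$, and that a ball of radius $r\in[\,|I_{n+1}(x)|,|J_n(x)|\,]$ meets only $O(r/|I_{n+1}(x)|+1)$ of them; all of this rests on the explicit cylinder-length formula and the bounded distortion of $T^n$, and is precisely the point at which the weight $\log s_{n+1}$ (rather than $P_n$) enters the exponent. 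A secondary technical point is to make the $e^{O(n)}$ error control uniform in $n$, which is guaranteed by $n=o(P_n)$.
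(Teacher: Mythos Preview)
The paper does not prove this lemma; it is quoted from Fan--Liao--Wang--Wu \cite{FaLiWaWu_13} and used as a black box. Your sketch is precisely the standard argument used in that reference: a natural cover by the ``fattened'' cylinders $J_n$ for the upper bound, and the mass distribution principle applied to the uniform (equal-splitting) measure $\mu$ on the Cantor set for the lower bound. So there is nothing to compare beyond noting that your route and the cited source's route coincide.

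One remark on the lower bound. In the regime $|J_n(x)|\le r<|I_n(x)|$ you assert that $B(x,r)$ meets at most $I_n(x)$ and one neighbouring order-$n$ cylinder, each carrying mass $\le m_n$. The gap between $J_n(x)$ and the endpoint $p_n/q_n$ of $I_n(x)$ is only $\asymp q_n^{-2}s_{n+1}^{-1}\asymp|J_n(x)|$, so for $r$ in this range the ball can indeed cross into the neighbouring cylinder $I_n(a_1,\dots,a_{n-1},a_n\pm1)$; what saves the estimate is that the $J_n$-block inside \emph{that} neighbour sits at distance $\asymp q_n^{-2}$ from the shared endpoint (the large gap is on the opposite side there), so $B(x,r)$ still misses its admissible children. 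You flag exactly this kind of separation as the main obstacle, so you are aware of it; just make sure the two-sided gap asymmetry is made explicit when you write it out.
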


\begin{lemma} [\cite{FeWULiTs_97,Luczak}]\label{LUZARK RESULT}
 For any  $a,c>1$,
\begin{equation*}
\begin{aligned}
&\hdim \left\{x\in[0,1):a_n(x)\geq c^{a^n},\ \text{for all} \ n\geq1\right\}\\
=&\hdim \left\{x\in[0,1):a_n(x)\geq c^{a^n},\ \text{for infinitely many}\  n\in\mathbb{N}\right\}\\
=&\frac{1}{1+a}.
\end{aligned}
\end{equation*}
\end{lemma}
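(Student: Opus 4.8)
\noindent The plan is to obtain both equalities by a squeezing argument. Since
$\{x:a_n(x)\ge c^{a^n}\ \text{for all }n\}\subseteq\{x:a_n(x)\ge c^{a^n}\ \text{for infinitely many }n\}$,
it suffices to prove the lower bound
$\hdim\{x:a_n(x)\ge c^{a^n}\ \forall n\}\ge\frac1{1+a}$
and the upper bound
$\hdim\{x:a_n(x)\ge c^{a^n}\ \text{i.m.}\}\le\frac1{1+a}$.
For the lower bound I would pass to the smaller set $\{x:c^{a^n}\le a_n(x)<2c^{a^n}\ \text{for all }n\}$ and invoke Lemma~\ref{anyset} with $s_n=\lceil c^{a^n}\rceil$. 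Since $\log(s_1\cdots s_n)=\tfrac{a^{n+1}}{a-1}\log c+O(1)$ and $\log s_{n+1}=a^{n+1}\log c+O(1)$, one has $\limsup_{n\to\infty}\frac{\log s_{n+1}}{\log(s_1\cdots s_n)}=a-1$, so Lemma~\ref{anyset} yields dimension $\frac1{2+(a-1)}=\frac1{1+a}$, which settles the lower bound for both sets.

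\smallskip

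\noindent For the upper bound, write $\overline E:=\{x:a_n(x)\ge c^{a^n}\ \text{i.m.}\}$ and fix $s>\frac1{1+a}$. I would cover $\overline E$ using, for each point, the \emph{first} index $n$ at which $a_n(x)\ge c^{a^n}$: then $a_k(x)<c^{a^k}$ for $k<n$, and $x$ lies in the single interval
\[
J_n(a_1,\dots,a_{n-1})=\bigl\{y:a_i(y)=a_i\ (i<n),\ a_n(y)\ge c^{a^n}\bigr\},\qquad \bigl|J_n(a_1,\dots,a_{n-1})\bigr|\asymp\frac1{c^{a^n}q_{n-1}^{2}},
\]
the length estimate coming from the facts collected in Section~2. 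Bundling all admissible values of $a_n$ into the single interval $J_n$, rather than splitting it into the cylinders $I_n(a_1,\dots,a_{n-1},a_n)$, is essential: the latter would introduce the tail $\sum_{a_n\ge c^{a^n}}a_n^{-2s}$, which diverges for $s<\tfrac12$. Using $q_{n-1}\ge\prod_{k<n}a_k$ and the elementary bound $\sum_{j\le M}j^{-2s}\lesssim M^{1-2s}$ (with implied constant $C_s$ depending on $s$, valid for $s<\tfrac12$), the $s$-content of this cover is
\[
\sum_{n\ge1}\ \sum_{\substack{a_1,\dots,a_{n-1}\\ a_k<c^{a^k}}}\bigl|J_n(a_1,\dots,a_{n-1})\bigr|^{s}\ \lesssim\ \sum_{n\ge1}c^{-sa^n}\prod_{k=1}^{n-1}\Bigl(\textstyle\sum_{a_k<c^{a^k}}a_k^{-2s}\Bigr)\ \lesssim\ \sum_{n\ge1}C_s^{\,n}\,c^{\,a^n\cdot\frac{1-s(1+a)}{a-1}},
\]
and since $s>\frac1{1+a}$ makes the exponent $\frac{1-s(1+a)}{a-1}$ negative, the terms decay doubly exponentially and the series converges. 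This would give $\hdim\overline E\le s$, and then $\hdim\overline E\le\frac1{1+a}$ on letting $s\downarrow\frac1{1+a}$.

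\smallskip

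\noindent The step I expect to be the main obstacle is turning the mere \emph{convergence} of that series into a genuine Hausdorff-dimension bound, precisely in the range $\frac1{1+a}<s<\frac12$, which is nonempty exactly because $a>1$. There the $s$-content above is finite but does not tend to $0$, and one cannot force vanishing naively: covering $\overline E$ via the first occurrence of $a_n(x)\ge c^{a^n}$ \emph{beyond} a large index $N$ would leave $a_1,\dots,a_{N-1}$ unconstrained, and $\sum_{a\ge1}a^{-2s}=\infty$ for $s<\tfrac12$; while covering it via a later occurrence forces a sum over an earlier partial quotient that is constrained only from below, giving the same divergence; and the intervals $J_n$ cannot be subdivided to shrink the mesh without recreating these divergent sums. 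Reconciling all this — exploiting the fact that $\overline E$ supplies \emph{infinitely many} such indices, together with the self-similarity of $\overline E$ under the Gauss map — to produce genuinely cheap covers at arbitrarily fine scales is the technical heart of the argument, and is what is carried out in \cite{Luczak} and \cite{FeWULiTs_97}.
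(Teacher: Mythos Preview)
The paper does not prove this lemma at all; it is quoted verbatim from \cite{Luczak,FeWULiTs_97}, so there is no in-paper argument to compare against. Your lower bound is correct and is exactly the ``simple proof of the lower bound'' in \cite{FeWULiTs_97}: passing to $\{x:\lfloor c^{a^n}\rfloor\le a_n(x)<2\lfloor c^{a^n}\rfloor\}$ and applying Lemma~\ref{anyset} with $\limsup_n\frac{\log s_{n+1}}{\log(s_1\cdots s_n)}=a-1$ gives $\frac1{1+a}$ on the nose.

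For the upper bound, your covering by the intervals $J_n(a_1,\dots,a_{n-1})$ indexed by the \emph{first} hit is the right object, your computation of the $s$-content is correct, and the obstacle you isolate is genuine: a single cover with finite $s$-content does not by itself bound the Hausdorff dimension, and in the range $\frac1{1+a}<s<\frac12$ every naive way of pushing the cover to finer scales (shifting to the first hit beyond $N$, or iterating to a later hit) reintroduces a factor $\sum_{a\ge1}a^{-2s}=\infty$ or $\sum_{a\ge M}a^{-2s}=\infty$. This is precisely the technical difficulty that \cite{Luczak} overcomes, and your decision to defer to that reference is appropriate --- it is also what the paper itself does. One small addendum that makes the picture cleaner: since $T^N(\overline E(c,a))\subset\overline E(c^{a^N},a)$ and inverse branches of $T^N$ are bi-Lipschitz on cylinders, the dimension is independent of $c$; combined with the fact that your $s$-content bound for $\overline E(c',a)$ tends to $0$ as $c'\to\infty$, this hints at why the self-similarity under the Gauss map is the right tool, though turning that hint into a proof still requires the careful bookkeeping of \cite{Luczak}.
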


Applying Lemma \ref{LUZARK RESULT}, we obtain the follwing corollary which will be useful for the upper bound estimation on $\hdim \underline{E}(\{t_i\}_{i=0}^m,\varphi).$

\begin{corollary}\label{Luzarks cor}
For any $a,c>1$ and $(t_0,\cdots,t_m)\in \mathbb{R}^{m+1}_+$ with $t_i\geq 0$ and at least one $t_i\neq0 \
(0\leq i\leq m)$,
\begin{equation*}
\begin{aligned}
&\hdim \left\{x\in[0,1):a^{t_0}_n(x)a^{t_1}_{n+1}(x)\cdots a^{t_m}_{n+m}(x)\geq c^{a^n},\ \text{for all} \ n\geq1\right\}\\
=&\hdim \left\{x\in[0,1):a^{t_0}_n(x)a^{t_1}_{n+1}(x)\cdots a^{t_m}_{n+m}(x)\geq c^{a^n},\ \text{for infinitely many}\  n\geq1\right\}\\
=&\frac{1}{1+a}.
\end{aligned}
\end{equation*}
\end{corollary}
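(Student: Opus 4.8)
The plan is to sandwich both sets in the corollary between two sets of the form treated in Lemma~\ref{LUZARK RESULT}, the point being that a lower bound on the weighted product forces (for the lower bound on dimension) or is forced by (for the upper bound) a lower bound on a single partial quotient, with all exponential bases kept strictly above $1$ thanks to the hypothesis that some $t_i\neq0$. Throughout put $t_{\max}=\max_{0\le i\le m}t_i>0$.

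First, the trivial inclusion
$$\bigl\{x:a_n^{t_0}(x)\cdots a_{n+m}^{t_m}(x)\ge c^{a^n}\ \text{for all }n\ge1\bigr\}\subseteq\bigl\{x:a_n^{t_0}(x)\cdots a_{n+m}^{t_m}(x)\ge c^{a^n}\ \text{for i.m. }n\bigr\}$$
reduces the lower bound to the ``for all $n$'' set. Set $s=\sum_{i=0}^m t_i a^i$; since $a>1$ and some $t_i\neq0$ we have $s>0$, hence $c_1:=c^{1/s}>1$. If $a_k(x)\ge c_1^{a^k}$ for every $k\ge1$, then for every $n\ge1$
$$a_n^{t_0}(x)\cdots a_{n+m}^{t_m}(x)\ \ge\ \prod_{i=0}^m c_1^{\,t_i a^{n+i}}\ =\ c_1^{\,a^n s}\ =\ c^{a^n},$$
so $\{x:a_k(x)\ge c_1^{a^k}\ \text{for all }k\}$ lies in the ``for all $n$'' set; by Lemma~\ref{LUZARK RESULT} (with $c_1$ in place of $c$) the former has dimension $\tfrac1{1+a}$, and therefore both sets in the corollary have Hausdorff dimension at least $\tfrac1{1+a}$.

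For the upper bound it suffices to treat the larger ``for i.m. $n$'' set. Suppose $x$ lies in it. For each of the infinitely many $n$ with $a_n^{t_0}(x)\cdots a_{n+m}^{t_m}(x)\ge c^{a^n}$ we have $\bigl(\max_{0\le i\le m}a_{n+i}^{t_i}(x)\bigr)^{m+1}\ge c^{a^n}>1$, so the maximum is attained at some index $i(n)\in\{0,\dots,m\}$ necessarily having $t_{i(n)}>0$, and $a_{n+i(n)}^{t_{i(n)}}(x)\ge c^{a^n/(m+1)}$. Using $t_{i(n)}\le t_{\max}$ and $a^n\ge a^{n+i(n)}/a^m$ this gives
$$a_{n+i(n)}(x)\ \ge\ c^{\,a^n/((m+1)t_{\max})}\ \ge\ \bigl(c^{\,1/((m+1)t_{\max}a^m)}\bigr)^{a^{\,n+i(n)}}\ =\ c_2^{\,a^{\,n+i(n)}},$$
where $c_2:=c^{1/((m+1)t_{\max}a^m)}>1$. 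Since the map $n\mapsto n+i(n)$ is at most $(m+1)$-to-one, its image over this infinite set of $n$ is infinite, so $a_k(x)\ge c_2^{a^k}$ for infinitely many $k$. Hence the ``for i.m. $n$'' set is contained in $\{x:a_k(x)\ge c_2^{a^k}\ \text{for i.m. }k\}$, which by Lemma~\ref{LUZARK RESULT} has dimension $\tfrac1{1+a}$. Combining the two bounds yields the claimed equalities.

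The argument is essentially bookkeeping with exponents; the only genuinely delicate point is ensuring that every base that appears ($c_1$, $c_2$) is strictly greater than $1$ after the weights are distributed, which is precisely where the assumption that at least one $t_i\neq0$ (equivalently $s>0$ and $t_{\max}>0$) is used — once in choosing $c_1$ for the lower bound and once in extracting a single large partial quotient for the upper bound.
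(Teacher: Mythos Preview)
Your proof is correct and follows essentially the same sandwich strategy as the paper: trap both sets between single-partial-quotient \L{}uczak sets and apply Lemma~\ref{LUZARK RESULT}. The only cosmetic differences are that for the lower bound the paper forces just one partial quotient $a_{n+k}^{t_k}$ (with $k=\min\{i:t_i\neq0\}$) to exceed $c^{a^n}$, whereas you distribute the growth over all partial quotients via $c_1=c^{1/s}$, and for the upper bound the paper implicitly pigeonholes on a fixed index $j$ while you use the finite-to-one image argument---both variants lead to the same conclusion in the same way.
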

\begin{proof}Denote $k=\min\{0\leq i\leq m:\ t_{i}\neq0\}$. It is clear that for some $t_{j}\neq0 \ (0\leq j\leq m),$
\begin{equation*}
\begin{aligned}
&\left\{x\in[0,1):a^{t_k}_{n+k}(x)\geq c^{a^n},\ \text{for all} \ n\geq1\right\}
\\
\subset&\left\{x\in[0,1):a^{t_0}_n(x)a^{t_1}_{n+1}(x)\cdots a^{t_m}_{n+m}(x)\geq c^{a^n},\ \text{ for all} \ n\geq1\right\}\\
\subset& \left\{x\in[0,1):a^{t_0}_n(x)a^{t_1}_{n+1}(x)\cdots a^{t_m}_{n+m}(x)\geq c^{a^n},\ \text{ for infinitely many} \ n\geq1\right\}\\
\subset&\left\{x\in[0,1):a^{t_{j}}_{n+j}(x)\geq c^{\frac{a^n}{m+1}},\ \text{ for infinitely many}\  n\geq1\right\}.
\end{aligned}
\end{equation*}
From Lemma \ref{LUZARK RESULT}, we deduce that for some $t_{j}\neq0\ (0\leq j\leq m),$
\begin{equation*}
\begin{aligned}
&\hdim \left\{x\in[0,1):a^{t_{k}}_{n+k}(x)\geq c^{a^n},\ \text{ for  all} \ n\geq1\right\}\\
=&\hdim \left\{x\in[0,1):a^{t_{j}}_{n+j}(x)\geq c^{a^n},\ \text{for infinitely many}\  n\in\mathbb{N}\right\}\\
=&\frac{1}{1+a}.
\end{aligned}
\end{equation*}
Then the desired results are obtained.
\end{proof}

\section{Proof of theorem \ref{foralln}}


Let $\varphi: \mathbb{N}\to \mathbb{R}_+$ be a positive function with $\varphi(n)\rightarrow\infty$ as $n\rightarrow \infty$. For any $(t_0,\cdots,t_m)\in \mathbb{R}^{m+1}_+$ with $t_i\geq 0$ and at least one $t_i\neq0 \
(0\leq i\leq m)$, we introduce the sets
\begin{align*}
ND(\varphi)=\left\{x\in[0,1):a^{t_0}_n(x)a^{t_1}_{n+1}(x)\cdots a^{t_m}_{n+m}(x)\geq \varphi(n),\text{ for all }\ n \geq 1\right\},
\end{align*}
and
\begin{align*}
{ND}^{\prime}(\varphi)=\left\{x\in[0,1):a^{t_0}_n(x)a^{t_1}_{n+1}(x)\cdots a^{t_m}_{n+m}(x)\geq \varphi(n),  \text{ for $n$ large enough }\right\}.
\end{align*}
In order to prove Theorem \ref{foralln}, we first give a complete characterization on the size of the sets $ND(\varphi)$ and ${ND}^{\prime}(\varphi)$
 in terms of Hausdorff dimension.

\begin{prop}\label{PQ set}For any  $(t_0,\cdots,t_m)\in \mathbb{R}^{m+1}_+$ with $t_i\geq 0$ and at least one $t_i\neq0~ (0\leq i\leq m)$,
\begin{equation*}
\hdim ND(\varphi)=\hdim {ND}^{\prime}(\varphi)=\frac{1}{1+A},\ \text{where}\ \log A=\limsup\limits_{n\rightarrow \infty} \frac{\log \log \varphi(n)}{n}.
\end{equation*}
\end{prop}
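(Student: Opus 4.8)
The plan is to prove matching upper and lower bounds for $\hdim ND(\varphi)$ and $\hdim ND'(\varphi)$, and then observe that $ND(\varphi)\subset ND'(\varphi)$ trivially gives one inclusion while the two sets differ only by a countable union of rescaled copies of $ND(\varphi)$-type sets, so their dimensions agree. More precisely, since $ND'(\varphi)=\bigcup_{N\geq1}\{x: a_n^{t_0}(x)\cdots a_{n+m}^{t_m}(x)\geq\varphi(n)\text{ for all }n\geq N\}$, and each set in the union is, up to a bi-Lipschitz cylinder map, comparable to an $ND$-type set for the shifted function $\varphi(\cdot+N)$ (which has the same $\limsup$ defining $A$), countable stability of Hausdorff dimension reduces everything to computing $\hdim ND(\varphi)$.

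For the upper bound, I would use Corollary \ref{Luzarks cor}. The point is that $\log A=\limsup_n \frac{\log\log\varphi(n)}{n}$ means that for any $a>A$ we have $\varphi(n)\leq \exp(a^n)= (e)^{a^n}$ for all large $n$ (wait — more carefully, $\log\varphi(n)\leq a^n$ eventually, so $\varphi(n)\leq e^{a^n}$). Hence $ND(\varphi)\subset ND'(\varphi)\subset\{x: a_n^{t_0}(x)\cdots a_{n+m}^{t_m}(x)\geq e^{a^n}\text{ for all large }n\}$, and after the same countable-union/cylinder-rescaling argument this last set has dimension $\frac{1}{1+a}$ by Corollary \ref{Luzarks cor}. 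Letting $a\downarrow A$ gives $\hdim ND(\varphi)\leq \frac{1}{1+A}$. The case $A=\infty$ gives dimension $0$ and the case $A=1$ (lower threshold) has to be checked separately, but the same estimate with $a$ slightly above $1$ works.

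For the lower bound I would construct, for each $a<A$, a Cantor-type subset of $ND(\varphi)$ of dimension at least $\frac{1}{1+a}$, then let $a\uparrow A$. Choose a subsequence $n_k\to\infty$ along which $\log\log\varphi(n_k)\geq a^{n_k}$, hence $\varphi(n_k)\geq \exp(a^{n_k})$. The idea is to prescribe the partial quotients so that the product constraint is met: roughly, at indices near $n_k$ force one partial quotient (say $a_{n_k}$, using $t_0>0$ after relabelling — here one uses that at least one $t_i\neq0$, and by an index shift we may take $t_0\neq0$) to be at least $\exp(a^{n_k}/t_0)$, while at all other indices keep the partial quotients moderately large (growing, but slowly, e.g. $a_n\asymp 2^n$ or a fixed large constant) so that the product condition $a_n^{t_0}\cdots a_{n+m}^{t_m}\geq\varphi(n)$ holds for \emph{every} $n$, not just along the subsequence — this requires care because the window of $m+1$ consecutive indices sweeping past $n_k$ must always contain enough mass. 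One then applies Lemma \ref{anyset} (the mass-distribution / dimension formula for sets with $s_n\leq a_n<Ns_n$) with a suitable sequence $\{s_n\}$: taking $s_n$ to grow like $\exp(a^{n}/(C))$ only at a sparse set of indices and bounded elsewhere makes $\limsup_n\frac{\log s_{n+1}}{\log(s_1\cdots s_n)}$ equal to $a-1$, wait: we need the formula $\frac{1}{2+\limsup \log s_{n+1}/\log(s_1\cdots s_n)}$ to equal $\frac{1}{1+a}$, i.e. we need that $\limsup = a-1$, which forces the $s_n$ to be doubly-exponential with base $a$ \emph{along a full-density-in-log sequence}, not sparse — so actually one should take $s_n=\exp(\lfloor a^n\rfloor)$-ish for all $n$ but then verify $\varphi(n)\leq s_n^{t_0}\cdots$, which holds along $n_k$ by construction and can be arranged for intermediate $n$ by choosing $a$ slightly smaller and the $s_n$ slightly larger than the bare minimum. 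This balancing is the main obstacle: making the constructed Cantor set simultaneously sit inside $ND(\varphi)$ (a \emph{for all $n$} condition, genuinely restrictive) and have the right dimension $\frac{1}{1+a}$. I expect the resolution is the standard one from \cite{HuWuXu_19,WaWu_008}: fix $a<A$, let $s_n=e^{a^n}$ when $n$ ranges over a block structure synchronised with the $n_k$, interpolate with $s_n=e^{a^n}$ everywhere (so the "for all $n$" product bound is automatic since consecutive windows overlap), check the resulting dimension via Lemma \ref{anyset} equals $\frac{1}{1+a}$, and conclude. Finally, handle $A=1$ by noting $\varphi(n)/n\to\infty$ forces $\log\varphi(n)$ unbounded so the relevant constructions still apply, and $A=\infty$ needs no lower bound.
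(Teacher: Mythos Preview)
Your proposal reverses the roles of the two halves of the $\limsup$ definition, and as a result both the upper and lower bound arguments point the wrong way.

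For the upper bound you use that $\varphi(n)\leq e^{a^n}$ eventually for $a>A$ and then claim
\[
ND(\varphi)\subset\{x:\ a_n^{t_0}(x)\cdots a_{n+m}^{t_m}(x)\geq e^{a^n}\ \text{for all large }n\}.
\]
This inclusion is false: if $\varphi(n)\leq e^{a^n}$ then the condition ``product $\geq\varphi(n)$'' is \emph{weaker} than ``product $\geq e^{a^n}$'', so the containment goes the other way. The correct route, and the one the paper takes, is to use the other half of the $\limsup$: for any $a<A$ one has $\varphi(n)\geq e^{a^n}$ for infinitely many $n$, whence
\[
ND(\varphi)\subset\{x:\ a_n^{t_0}(x)\cdots a_{n+m}^{t_m}(x)\geq e^{a^n}\ \text{for infinitely many }n\},
\]
and Corollary~\ref{Luzarks cor} gives $\hdim ND(\varphi)\leq\frac{1}{1+a}$; letting $a\uparrow A$ finishes. (When $A=1$ this fails since one cannot take $a>1$; the paper handles that case separately via Lemma~\ref{a_ninfinity}, showing $\hdim\{x:\text{product}\to\infty\}=\tfrac12$.)

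Your lower bound has the mirror-image problem. You fix $a<A$ and aim to build a subset of $ND(\varphi)$ of dimension $\geq\frac{1}{1+a}$; but if this were possible for every $a<A$ (in particular for $a$ close to $1$) it would force $\hdim ND(\varphi)\geq\tfrac12$, contradicting the result when $A>1$. The difficulty you encountered (``making the constructed Cantor set simultaneously sit inside $ND(\varphi)$\dots'') is not a technicality but a genuine obstruction: along the subsequence where $\varphi(n_k)\geq e^{a^{n_k}}$ you cannot keep the partial quotients small enough to get dimension $\frac{1}{1+a}$. The paper's lower bound instead uses $\varphi(n)\leq e^{(A+\epsilon)^n}$ eventually, so that
\[
\{x:\ a_n^{t_0}(x)\cdots a_{n+m}^{t_m}(x)\geq e^{(A+\epsilon)^n}\ \text{for all }n\}\subset ND'(\varphi),
\]
and Corollary~\ref{Luzarks cor} gives the left-hand set dimension $\frac{1}{1+A+\epsilon}$. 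No delicate Cantor construction via Lemma~\ref{anyset} is needed here at all.
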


We remark that recently Zhang (\cite{Zh_20}) obtained the Hausdorff dimension results of $ND(\varphi)$ and ${ND}^{\prime}(\varphi)$ for the special case $t_{0}=t_{1}=1$, $t_{i}=0\ (i\geq2).$

\subsection{ Proof of  Proposition \ref{PQ set} }\
\\
To prove Proposition \ref{PQ set}, we start with the following lemma.
\begin{lemma} \label{a_ninfinity}
For any $(t_0,\cdots,t_m)\in \mathbb{R}^{m+1}_+$ with $t_i\geq 0$ and at least one $t_i\neq0~(0\leq i\leq m)$,
\begin{equation*}
\hdim \left\{x\in[0,1): a^{t_0}_n(x)a^{t_1}_{n+1}(x)\cdots a^{t_m}_{n+m}(x)\rightarrow \infty, \ \text{as}\ n\rightarrow \infty\right\}=\frac 1 2.
\end{equation*}
\end{lemma}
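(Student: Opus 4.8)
The plan is to prove the two inequalities separately, exploiting the fact that the weighted product differs from a single partial quotient only by bounded multiplicative factors on suitable subsets, together with the classical fact (Good's theorem, or Lemma \ref{anyset}) that $\{x : a_n(x)\to\infty\}$ has Hausdorff dimension $1/2$.

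\textbf{Upper bound.} First I would show $\hdim\{x : a^{t_0}_n(x)a^{t_1}_{n+1}(x)\cdots a^{t_m}_{n+m}(x)\to\infty\}\le 1/2$. Let $k=\min\{i : t_i\ne 0\}$ and $j$ be any index with $t_j\ne 0$. If the weighted product tends to infinity, then for every $N$ there is $n_0$ with $a^{t_0}_n\cdots a^{t_m}_{n+m}\ge N$ for all $n\ge n_0$; hence for each such $n$ at least one factor exceeds $N^{1/((m+1)t_{\max})}$, i.e. $\max_{0\le i\le m} a_{n+i}(x)\ge N^{1/((m+1)t_{\max})}$ for all large $n$. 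This forces $a_\ell(x)\to\infty$ along the sequence $\ell\to\infty$ in the weak sense that $\limsup$ over blocks of length $m+1$ diverges; more precisely, the set in question is contained in $\bigcup_{r=0}^{m}\{x : a_{n}(x)\to\infty \text{ along } n\equiv r \bmod (m+1)\}$ up to handling the block structure, and each of these has dimension $\le 1/2$ by the same argument as Good's theorem (a point whose partial quotients along a fixed residue class tend to infinity lies, after a coordinate projection, in a Good-type set). Alternatively, and more cleanly, I would simply note that the set is contained in $\{x : \limsup_n a_n(x)=\infty\}$... which is everything, so that does not suffice; the block argument via residue classes is the right one, and the dimension bound $1/2$ for each residue class follows from Lemma \ref{anyset} applied with a rapidly growing lower envelope on that residue class, or from a direct covering argument as in \cite{Go_41}.

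\textbf{Lower bound.} For $\hdim\ge 1/2$, I would construct an explicit subset. Fix the smallest index $k$ with $t_k\ne 0$. Choose a rapidly increasing sequence of positive integers $\{s_n\}$ with $s_n\to\infty$ (for instance $s_n = 2^n$ suffices, since we only need divergence to $\infty$, not any particular speed — this keeps $\limsup \frac{\log s_{n+1}}{\log(s_1\cdots s_n)}=0$). Consider the set $F=\{x : s_n\le a_n(x) < 2 s_n \text{ for all } n\ge 1\}$. On $F$ every partial quotient tends to infinity, hence so does the weighted product $a^{t_0}_n(x)\cdots a^{t_m}_{n+m}(x)$ (it is bounded below by $a_{n+k}^{t_k}\ge s_{n+k}^{t_k}\to\infty$). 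Thus $F$ is contained in our set, and by Lemma \ref{anyset},
\begin{equation*}
\hdim F=\frac{1}{2+\limsup_{n\to\infty}\frac{\log s_{n+1}}{\log(s_1 s_2\cdots s_n)}}=\frac12,
\end{equation*}
since $\log s_{n+1}=O(n)$ while $\log(s_1\cdots s_n)\sim \tfrac{n^2}{2}\log 2 \to\infty$ faster. Therefore $\hdim\ge 1/2$.

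\textbf{Main obstacle.} The only genuinely delicate point is the upper bound. The set $\{x : \text{weighted product}\to\infty\}$ does \emph{not} equal $\{x : a_n(x)\to\infty\}$ — it is strictly larger, since only one factor in each length-$(m+1)$ window needs to be large. So I cannot directly quote Good's half-dimension result; I must show that requiring, for all large $n$, that $\max(a_n,\dots,a_{n+m})$ be large still forces dimension $\le 1/2$. I would do this by decomposing according to which residue class mod $(m+1)$ infinitely often realizes the maximum (or by a pigeonhole over the position of the large quotient in each window), reducing to a countable union of sets each of which, after a Lipschitz coordinate change that deletes the bounded partial quotients, embeds into a Good-type set of dimension $1/2$; since Hausdorff dimension is stable under countable unions, the bound follows. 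Care is needed to make the coordinate-change/covering argument precise, but it is standard in this circle of ideas (cf. \cite{Go_41,WaWu_008}), so I expect no essential difficulty beyond bookkeeping.
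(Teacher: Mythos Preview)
Your lower bound is fine and is essentially the paper's: both observe that $\{x:a_n(x)\to\infty\}\subset C(\infty)$ and that this subset has Hausdorff dimension $1/2$ (you make this explicit via Lemma~\ref{anyset} with $s_n=2^n$, the paper simply quotes Good's result).

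The upper bound, however, has a genuine gap. The residue-class decomposition you propose is false: having $\max(a_n,\ldots,a_{n+m})\to\infty$ does \emph{not} force $a_\ell\to\infty$ along any fixed arithmetic progression. For a concrete counterexample with $m=1$, pick a set $S\subset\N$ that contains no two consecutive integers yet meets every arithmetic progression infinitely often (build it greedily: enumerate all arithmetic progressions and choose one element from each, always at distance $>1$ from previously chosen elements). Set $a_n=1$ for $n\in S$ and $a_n=n$ otherwise; then $a_na_{n+1}\ge n\to\infty$ since at most one factor equals $1$, yet along every arithmetic progression $a_n=1$ infinitely often. So $C(\infty)$ is not contained in any countable union of ``Good-along-a-progression'' sets, and the ``Lipschitz deletion of bounded partial quotients'' you allude to cannot be made into a well-defined map on $C(\infty)$.

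The paper's upper bound runs by an entirely different mechanism: a Bernoulli-measure comparison. For $t>1$ set $\mu_t(I_n(a_1,\ldots,a_n))=e^{-nP(t)}\prod_{j=1}^n a_j^{-t}$ with $e^{P(t)}=\sum_{k\ge 1} k^{-t}$. Fix $s>\tfrac12$ and put $t=s+\tfrac12$. The key observation is that on the admissible cylinders (those with $a_k^{t_0}\cdots a_{k+m}^{t_m}\ge M$ for every window) one has $\prod_{j=1}^n a_j\ge M^{cn/t_{\max}}$ for some $c=c(m)>0$; choosing $M$ large enough then yields $q_n^{-2s}\le\mu_{s+1/2}(I_n)$. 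Summing over all admissible level-$n$ cylinders and using that $\mu_t$ is a probability measure gives $\mathcal H^s(E_M(1))\le 1$, hence $\hdim C(\infty)\le s$ for every $s>\tfrac12$. This argument exploits only the \emph{total} growth of $\prod_j a_j$ and never needs to locate the large partial quotient within a window, which is exactly where your pigeonhole attempt breaks down.
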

\begin{proof} Denote by $C(\infty)$ the set above and $k=\min\{0\leq i\leq m:\ t_{i}\neq0\}$. It is evident that
\begin{equation*}
\begin{aligned}
&\left\{x\in[0,1): a^{t_k}_{n+k}(x)\rightarrow \infty, \ \text{as}\ n\rightarrow \infty\right\}\\
\subset& \left\{x\in[0,1): a^{t_0}_n(x)a^{t_1}_{n+1}(x)\cdots a^{t_m}_{n+m}(x)\rightarrow \infty, \ \text{as}\ n\rightarrow \infty\right\}.
\end{aligned}
\end{equation*}
So, $\hdim C(\infty)\geq \frac 1 2.$

In the following, we give the upper bound for $\hdim C(\infty)$.

\textbf{Step \uppercase\expandafter{\romannumeral1}.} We find a cover for $C(\infty)$. For any $M>0$,
 \begin{equation*}
 \begin{aligned}
 C(\infty)&\subset \left\{x\in[0,1): a^{t_0}_n(x)a^{t_1}_{n+1}(x)\cdots a^{t_m}_{n+m}(x)\geq M, \ \text{for $n$ large enough}\right\}\\
 &=\bigcup_{N=1}^\infty \left\{x\in[0,1): a^{t_0}_n(x)a^{t_1}_{n+1}(x)\cdots a^{t_m}_{n+m}(x)\geq M, \ \text{for}\ n\geq N\right\}\\
 &:=\bigcup_{N=1}^\infty E_M(N).
 \end{aligned}
 \end{equation*}
It is clear that $\hdim C(\infty)\leq\hdim E_M(1)$, since $\hdim C(\infty)\leq\hdim \sup\limits_{N\geq1}E_M(N)$, and by \cite[Lemma 1]{Go_41}, we have $\hdim E_M(N)=\hdim E_M(1)$ for any $N\geq 1$. So it is sufficient to estimate the upper bound for $E_M(1)$.
For any $n\geq1$, set
$$D_n(M)=\left\{(a_1,\cdots,a_n)\in\mathbb{N}^n:\ a^{t_0}_ka^{t_1}_{k+1}\cdots a^{t_m}_{k+m}\geq M, \ \text{for}\ 1\leq k \leq n-m\right\}.$$
Hence,
\begin{equation}
\begin{aligned}\label{cover}
E_M(1)\subset &\bigcup_{(a_1,\cdots,a_{n})\in D_{n}(M)}I_n(a_1,\cdots,a_n).
\end{aligned}
\end{equation}
 \medskip
\textbf{Step \uppercase\expandafter{\romannumeral2}.} We construct a family of Bernoulli measures $\{\mu_t\}_{t>1}$ on $[0, 1)$. For each $t>1$ and any $(a_1,\cdots,a_n)\in \mathbb{N}^n,$ put
$$\mu_t(I_n(a_1,\cdots,a_n))=e^{-nP(t)-t\Sigma_{j=1}^n\log a_j},$$
where $e^{P(t)}=\sum_{k=1}^\infty k^{-t}.$ It is easy to see that
$$\sum_{a_{n+1}}\mu_t(I_n(a_1,\cdots,a_n,a_{n+1}))=\mu_t(I_n(a_1,\cdots,a_n))$$
and
$$\sum_{(a_1,\cdots,a_n)\in\mathbb{N}^n}\mu_t(I_n(a_1,\cdots,a_n))=1.$$
So the measures $\{\mu_t\}_{t>1}$ are well defined by Kolmogorov's consistency theorem.

Fix $s>\frac 1 2$ and set $t=s+\frac1 2>1$. Choose $M$ sufficiently large such that
\begin{equation}\label{bernoulli-estimation}
\left(s-\frac{1}{2}\right)\cdot\frac{\log M^{t^{-1}_{\max}}}{2m}\geq P(s+\frac{1}{2}),
\end{equation}
where $t_{\max}=\max\{t_i:0\leq i\leq m\}$.

 We claim that for any $(a_1,\cdots,a_n)\in D_n(M)$,
\begin{equation}\label{estimate for q_n}
q_n^{-2s}\leq\mu_{s+\frac 1 2}(I_n(a_1,\cdots,a_n)).
\end{equation}
More precisely, for any $(a_1,\cdots,a_n)\in D_n(M)$, by the fact that for $1\leq l\leq n-m$,
$$a^{t_0}_la^{t_1}_{l+1}\cdots a^{t_m}_{l+m}\geq M,$$
then for $1\leq l\leq n-m$,
$$a_la_{l+1}\cdots a_{l+m}\geq M^{t^{-1}_{\max}},$$
where $t_{\max}=\max\{t_i:0\leq i\leq m\}$. Then we have
\begin{equation*}
 e^{-2s\sum_{j=1}^n\log a_j}\leq e^{-(s+\frac 1 2)\sum_{j=1}^n\log a_j-(s-\frac 1 2) \lfloor \frac {n}{ m}\rfloor \log M^{t^{-1}_{\max}}}.
\end{equation*}
 Thus, by $q_n\geq \prod\limits_{i=1}^na_i$ and then \eqref{bernoulli-estimation}, we get
\begin{equation*}
q_n^{-2s}\leq e^{-2s\sum_{j=1}^n\log a_j}\leq e^{-(s+\frac 1 2)\sum_{j=1}^n\log a_j-nP(s+\frac 1 2)}
\end{equation*}
Therefore, by \eqref{cover} and \eqref{estimate for q_n},
\begin{equation*}
\begin{aligned}
\mathcal{H}^{s}(E_M(1))&\leq \liminf_{n\to \infty}\sum_{(a_1,\cdots,a_{n})\in D_{n}(M)}{\big|I_n(a_1,\cdots,a_{n})\big|}^s\\
&\leq \liminf_{n\to \infty}\sum_{(a_1,\cdots,a_{n})\in D_{n}(M)}\frac{1}{q_n^{2s}}\\
&\leq\liminf_{n\to \infty}\sum_{(a_1,\cdots,a_{n})\in D_{n}(M)}\mu_{s+\frac{1}{2}}(I_{n}(a_1,\cdots,a_{n}))=1.
\end{aligned}
\end{equation*}
 Hence $\hdim E_M(1)\leq s$, and then $\hdim C(\infty)\leq s$. Consequently, $\hdim C(\infty)\leq \frac 1 2$ by the arbitrariness of $s>\frac 1 2$.
This completes the proof of Lemma \ref{a_ninfinity}.
\end{proof}

Now we are ready to prove Proposition \ref{PQ set}.\\

\textbf{\noindent\text{Proof of Proposition \ref{PQ set}:}} We see that $\hdim {ND}^{\prime}(\varphi)=\hdim ND(\varphi).$ The proof is divided into two cases according to $A=1$ or $A>1$.

\emph{(1)} If $A=1,$ then for any $\epsilon>0$, $\varphi(n)\leq e^{ {(1+\epsilon)}^n}\ \text{holds for $n$ large enough},$ and we have
$$\big\{x\in [0,1): \ a_n(x)\geq e^{ {(1+\epsilon)}^n}\ \text{for $n$ large enough}\big\}\subset {ND}^{\prime}(\varphi).$$
By Lemma \ref{LUZARK RESULT}, we obtain
$$\hdim{ND}^{\prime}(\varphi)\geq \frac{1}{2}.$$

On the other hand,
$$ND(\varphi)\subset \left\{x\in[0,1): a^{t_0}_n(x)a^{t_1}_{n+1}(x)\cdots a^{t_m}_{n+m}(x)\rightarrow \infty, \ \text{as}\ n\rightarrow \infty\right\}.$$
Thus,
$$\hdim ND(\varphi)\leq \frac 1 2 .$$

\emph{(2)} If $A>1$, by the definition of limsup, for any $\epsilon>0$,
\begin{equation*}
\begin{cases}&\varphi(n)\geq e^{{(A-\epsilon)}^n },\; \text{for infinitely many} \ n \in\mathbb{N},\\
&\varphi(n)\leq e^{{(A+\epsilon)}^n },\; \text{for all sufficiently large}\;n.
\end{cases}
\end{equation*}
Therefore
$$\left\{x\in[0,1):a^{t_0}_n(x)a^{t_1}_{n+1}(x)\cdots a^{t_m}_{n+m}(x)\geq e^{ {(A+\epsilon)}^n}, \ \text{for any $n\geq1$}\right\}\subset {ND}^{\prime}(\varphi).$$
Applying Corollary \ref{Luzarks cor}, we obtain
\begin{equation*}\hdim{ND}^{\prime}(\varphi)\geq \frac{1}{1+A+\epsilon}.
\end{equation*}
By the arbritrary of $\epsilon$, we have $$\hdim{ND}^{\prime}(\varphi)\geq\frac{1}{1+A}.$$

On the other hand,

$$ND(\varphi)\subset\left\{x\in[0,1):a^{t_0}_n(x)a^{t_1}_{n+1}(x)\cdots a^{t_m}_{n+m}(x)\geq e^{ {(A-\epsilon)}^n}, \ \text{for infinitely many}\ n\in \mathbb{N}\right\}.$$
From Corollary \ref{Luzarks cor}, we obtain
\begin{equation*}\hdim ND(\varphi)\leq \frac{1}{1+A-\epsilon}.
\end{equation*}
Taking $\epsilon\to0$, we conclude
$$\hdim ND(\varphi)\leq \frac{1}{1+A}.$$
\qed \
\\

Let us give a proof of Theorem \ref{foralln}.

\subsection{ Proof of  Theorem  \ref{foralln} }\

\textbf{Upper bound:} For $x\in\underline{E}(\{t_i\}_{i=0}^m,\varphi)$, for any $\epsilon>0$, we have
$$a^{t_0}_n(x)a^{t_1}_{n+1}(x)\cdots a^{t_m}_{n+m}\geq e^{(1-\epsilon)\varphi(n)}, \ \text{for $n$ large enough}.$$
Then it follows from Proposition \ref{PQ set} that

\begin{equation*}
\hdim \underline{E}(\{t_i\}_{i=0}^m,\varphi)\leq\frac{1}{1+B}.
\end{equation*}

\textbf{Lower bound:} It is trivial for $B=\infty$, so we only need to consider the case $1\leq B<\infty$.

We construct a suitable Cantor subset of $\underline{E}(\{t_i\}_{i=0}^m,\varphi)$ in two steps.

\textbf{Step \uppercase\expandafter{\romannumeral1}.} Since $ \log B=\limsup\limits_{n\rightarrow \infty} \frac{ \log \varphi(n)}{n },$ for any $\epsilon>0$, we have $\varphi(n)\leq {(B+\frac \epsilon 2)}^n$ for $n$ large enough. Hence,
\begin{equation*}
\varphi(n){(B+\epsilon)}^{j-n}\leq {(B+\frac \epsilon 2)}^n{(B+\epsilon)}^{j-n}\to 0, \ \text{as} \ n\to \infty.
\end{equation*}
We define a sequence $\{L_j\}_{j\geq 1}$: For $j,k\geq1$, let
\begin{equation*}
 c_{j,k}=\begin{cases}&\exp({\varphi(k)}),  \ \ \ \ \ \ \ \ \ \ \ \ \ \ \  1\leq k \leq j;\\
 & \exp(\varphi(k){(B+\epsilon)}^{j-k}), \ k\geq j+1.
 \end{cases}
\end{equation*}
Define $L_j=\sup\limits_{k\geq 1}\{c_{j,k}\}$.
Clearly,
\begin{equation}\label{L j+1-L_j}
L_j\leq L_{j+1}\leq L_{j}^{B+\epsilon}\ \text{and}\ L_j\geq e^{\varphi(j)}\ \text{for any}\ j\geq 1.
\end{equation}
By the first part of \eqref{L j+1-L_j},
\begin{equation*}
\log L_{j+1}-\log L_j\leq (B+\epsilon-1)\log L_j.
\end{equation*}
Hence
\begin{equation}\label{for limsup set}
\log L_{n+1}-\log L_1\leq (B+\epsilon-1)\sum\limits_{j=1}^n\log L_j.
\end{equation}
We claim that
\begin{equation}\label{L j+1-L_j-2}
\liminf_{n\to\infty}\dfrac{\log L_n}{\varphi(n)}=1.
\end{equation}
In fact, on the one hand, in view of the second part of \eqref{L j+1-L_j}, we see at once that
$$\liminf_{n\to\infty}\dfrac{\log L_n}{\varphi(n)}\geq1.$$

For the opposite inequality, let $t_j:=\min\{k\geq1: c_{j,k}=L_j\}$. Notice that for many consecutive $j^{\prime}$s, the number $t_j$ will be the same. More precisely, if $t_j<j$, $t_{t_j}=t_{t_j+1}=\cdots=t_j;$ if  $t_j\geq j$, $t_{j}=t_{j+1}=\cdots=t_{t_j}.$ Let $\{l_i\}$ be the sequence of all ${t_{t_j}}^{\prime}$s in the strictly increasing order. Then we obtain $L_{l_i}=\exp{\varphi(l_i)}$ and thus
$$\liminf_{n\to\infty}\dfrac{\log L_n}{\varphi(n)}\leq\liminf_{i\to\infty}\dfrac{\log L_{l_i}}{\varphi(l_i)}=1.$$

Let
\begin{equation*}
Z:=\liminf\limits_{n\to \infty}\dfrac{\log \left(L_n^{t_0}L_{n+1}^{t_1}\cdots L_{n+m}^{t_{m+1}}\right)}{\varphi(n)}.
\end{equation*}
We claim that
\begin{equation*}
t_k\leq Z<\infty,
\end{equation*}
where $k=\min\{0\leq i\leq m:\ t_{i}\neq0\}$.
In fact, by the first part of \eqref{L j+1-L_j} and \eqref{L j+1-L_j-2}, we can check that
\begin{equation*}
Z\geq \liminf\limits_{n\to\infty}\dfrac{\log L_{n+k}^{t_k}}{\varphi(n)}\geq\liminf\limits_{n\to\infty}\dfrac{\log L_n^{t_k}}{\varphi(n)}= t_k.
\end{equation*}
On the other hand,
\begin{equation*}
\begin{aligned}
\liminf_{n\to\infty}\dfrac{\log \left(L_n^{t_0}L_{n+1}^{t_1}\cdots L_{n+m}^{t_{m+1}}\right)}{\varphi(n)}&\overset{\eqref{L j+1-L_j}}\leq \liminf\limits_{n\to \infty}\dfrac{\left(t_0+t_1(B+\epsilon)+\cdots+t_{m+1}{(B+\epsilon)}^m\right)\log L_n}{\varphi(n)}\\
&\overset{\eqref{L j+1-L_j-2}}= t_0+t_1(B+\epsilon)+\cdots+t_{m+1}{(B+\epsilon)}^m<\infty.
\end{aligned}
\end{equation*}
\textbf{Step \uppercase\expandafter{\romannumeral2}.}  We use the sequence $\{L_j\}_{j\geq1}$ and $Z$ to construct a subset of $\underline{E}(\{t_i\}_{i=0}^m,\varphi).$  Define

\begin{equation*}
E(\{L_n\}_{n\geq1})=\left\{x\in[0,1):\lfloor L^{\frac 1 Z}_n\rfloor\leq a_n(x)<2\lfloor L^{\frac 1 Z}_n\rfloor,\ \text{for any}\ n\geq1\right\}.
\end{equation*}
Then
\begin{equation*}
\begin{aligned}
\liminf\limits_{n\to\infty}\dfrac{\log \left(a^{t_0}_na^{t_1}_{n+1}\cdots a^{t_{m+1}}_{n+m}\right)}{\varphi(n)}&=
\liminf\limits_{n\to\infty}\dfrac{\log \left(L^{\frac{t_0}{Z}}_nL^{\frac{t_1}{Z}}_{n+1}\cdots L^{\frac{t_{m+1}}{Z}}_{n+m}\right)}{\varphi(n)}\\
&=\frac{1}{Z}\liminf\limits_{n\to\infty}\dfrac{\log \left(L^{t_0}_nL^{t_1}_{n+1}\cdots L^{t_{m+1}}_{n+m}\right)}{\varphi(n)}=1.
\end{aligned}
\end{equation*}
Hence $E(\{L_n\}_{n\geq1})\subset\underline{E}(\{t_i\}_{i=0}^m,\varphi).$
Since $\varphi(n)/n\to \infty$ as $n\to \infty$, by the second part of \eqref{L j+1-L_j}, we see that
\begin{equation*}
\lim\limits_{n\to\infty}\dfrac{\log (L_1L_2\cdots L_n)}{n}=\infty.
\end{equation*}
By Lemma \ref{anyset}, we obtain

\begin{equation*}
\hdim \underline{E}(\{t_i\}_{i=0}^m,\varphi)\geq \hdim E(\{L_n\}_{n\geq1})=\dfrac{1}{2+\limsup\limits_{n\to\infty}\frac{\log L_{n+1}}{\log (L_1L_2\cdots L_n)}}\overset{\eqref{for limsup set}}\geq \frac{1}{B+1+\epsilon}.
\end{equation*}
Taking $\epsilon\to0$, we conclude
$$\hdim \underline{E}(\{t_i\}_{i=0}^m,\varphi)\geq \frac{1}{B+1}.$$
\qed

\section{ Proof of Theorem \ref{forinfinitelymanyn}}

In this section, we give a proof of Theorem \ref{forinfinitelymanyn}. We adopt the strategies in \cite{LiRa_16}.
The proof of the theorem splits into two parts: finding the upper bound and the lower bound separately.

\textbf{Upper bound:}
For $x\in\overline{E}(\{t_i\}_{i=0}^m,\varphi)$, for any $\epsilon>0$, there exist infinitely many $n$ such that
$$a^{t_0}_n(x)a^{t_1}_{n+1}(x)\cdots a^{t_m}_{n+m}\geq e^{(1-\epsilon)\varphi(n)}.$$

Then by \cite[Theorem 1.5]{BaHuKlWa_22},

\begin{equation*}
\hdim \overline{E}_{m}(\{t_i\}_{i=0}^m,\varphi)\leq\frac{1}{1+b}.
\end{equation*}

\textbf{Lower bound:} We construct a suitable Cantor subset of $\overline{E}(\{t_i\}_{i=0}^m,\varphi)$ in two steps.

\textbf{Step \uppercase\expandafter{\romannumeral1}.} We will construct a sequence $\{c_n\}_{n\geq1}$ of positive real numbers such that
$$\limsup\limits_{n\to\infty}\dfrac{\log\left(c^{t_0}_nc^{t_1}_{n+1}\cdots c^{t_m}_{n+m}\right)}{\varphi(n)}=1,$$
and
$$\limsup\limits_{n\to\infty}\dfrac{\log c_{n+1}}{\log\left(c_1c_2\cdots c_n\right)}\leq b+\epsilon-1.$$

For all $n\in \mathbb{N}$, let $\Phi(n)=\min\limits_{k\geq n} \varphi(k).$ Since $\varphi(n)\to \infty$, as $n\to \infty$, $\Phi(n)$ is well defined. Thus, $\Phi(n)\leq \varphi(n),$ $\Phi(n)\leq \Phi(n+1)$ for all $n\in \mathbb{N}$. We claim that
\begin{equation*}
\Phi(n)= \varphi(n), \ \text{infinitely many}\  n\in \mathbb{N}.
\end{equation*}
If not, there exists $N\in\mathbb{N}$ such that for any $n\geq N$, $\Phi(n)< \varphi(n).$ Then for $n\geq N$, $\Phi(n)< \min\limits_{k\geq n}\varphi(k),$ which contradicts to the definition of $\Phi(n)$.

We define a sequence $\{c_n\}_{n\geq1}$ as follows:
\begin{equation}\label{sequencecn}
\begin{aligned}
&c_1=c_2=\cdots=c_m=1,\ c^{t_m}_{m+1}=e^{\Phi(1)},\\
&c^{t_m}_{n+m}=\min\left\{\dfrac{e^{\Phi(n)}}{c^{t_0}_{n}c^{t_1}_{n+1}\cdots c^{t_{m-1}}_{n+m-1}},{(c_1c_2\cdots c_{n+m-1})}^{t_m(b+\epsilon-1)}\right\},\  \text{for}\  n\geq 2.
\end{aligned}
\end{equation}
Since $(t_0,t_1,\cdots,t_m)\in \mathbb{R}_+^{m+1}$ with $0<t_0\leq t_1\leq \cdots \leq t_m$ and $\Phi$ is nondecreasing, we have  $c_n\geq 1$ for all $n\geq1$. Thus
\begin{equation}\label{limsupb-1}
\limsup\limits_{n\to\infty}\frac{\log c_{n+1}}{\log (c_1c_2\cdots c_n)}\leq \limsup\limits_{n\to\infty}\frac{\log {(c_1c_2\cdots c_n)}^{b+\epsilon-1}}{\log (c_1c_2\cdots c_n)}=b+\epsilon-1.
\end{equation}
We also claim that
\begin{equation}\label{sequencec-n}
c^{t_m}_{n+m}=\frac{e^{\varphi(n)}}{c^{t_0}_{n}c^{t_1}_{n+1}\cdots c^{t_{m-1}}_{n+m-1}} \ \text{for infinitely many}\ n.
\end{equation}
In order to prove \eqref{sequencec-n}, we first show that
\begin{equation} \label{sequencec-n-1}
c^{t_m}_{n+m}=\frac{e^{\Phi(n)}}{c^{t_0}_{n}c^{t_1}_{n+1}\cdots c^{t_{m-1}}_{n+m-1}} \ \text{for infinitely many}\ n.
\end{equation}
If not, there exists $N\in\mathbb{N}$ such that for any $n\geq N$,

\begin{equation}\label{sequence-2}
\begin{cases}
&c_{n+m}={\left(c_1c_2\cdots c_{n+m-1}\right)}^{b+\epsilon-1}\\
&\frac{e^{\Phi(n)}}{c^{t_0}_{n}c^{t_1}_{n+1}\cdots c^{t_{m-1}}_{n+m-1}} >{\left(c_1c_2\cdots c_{n+m-1}\right)}^{t_m(b+\epsilon-1)}.
\end{cases}
\end{equation}
Then
\begin{equation}\label{sequence-3}
\begin{cases}
&c_{n+m}=c^{b+\epsilon}_{n+m-1}\\
&e^{\Phi(n)} >{\left(c_1c_2\cdots c_{n+m-1}\right)}^{b+\epsilon-1}\dot c^{t_0}_{n}c^{t_1}_{n+1}\cdots c^{t_{m-1}}_{n+m-1}>{\left(c_1c_2\cdots c_{n+m-1}\right)}^{t_m(b+\epsilon-1)}.
\end{cases}
\end{equation}
Therefore, by \eqref{sequence-2} and \eqref{sequence-3}
\begin{equation}\label{sequence-4}
\begin{aligned}
\prod_{k=1}^nc_k=&\left(\prod_{k=1}^{N+m-1}c_k\right)\cdot c_{N+m}\cdot c_{N+m+1}\cdots c_n\\
=&\left(\prod_{k=1}^{N+m-1}c_k\right)\cdot {\left(\prod_{k=1}^{N+m-1}c_k\right)}^{b+\epsilon-1}\cdot c_{N+m+1}\cdots c_n\\
=&\left(\prod_{k=1}^{N+m-1}c_k\right)\cdot {\left(\prod_{k=1}^{N+m-1}c_k\right)}^{b+\epsilon-1}\cdot{\left(\prod_{k=1}^{N+m-1}c_k\right)}^{(b+\epsilon-1)(b+\epsilon)}\\
\cdots&{\left(\prod_{k=1}^{N+m-1}c_k\right)}^{(b+\epsilon-1){(b+\epsilon)}^{n-N-m}}\\
=&{\left(\prod_{k=1}^{N+m-1}c_k\right)}^{{(b+\epsilon)}^{n-N-m+1}}.
\end{aligned}
\end{equation}
Combining \eqref{sequence-3} with \eqref{sequence-4}, we obtain
\begin{equation*}
\begin{aligned}
\liminf\limits_{n\to\infty}\frac{\Phi(n+1)}{n+1}
&>\liminf\limits_{n\to\infty}\dfrac{{\log \log \left(c_1c_2
\cdots c_{n+m}\right)}^{t_m(b+\epsilon-1)}}{n+1}\\
&=\liminf\limits_{n\to\infty}\dfrac{\log \log {\left(\prod\limits_{k=1}^{N+m-1}c_k\right)}^{t_m(b+\epsilon-1){(b+\epsilon)}^{n-N+1}}}{n+1}=\log(b+\epsilon).
\end{aligned}
\end{equation*}
Then
\begin{equation*}
\liminf\limits_{n\to \infty}\frac{\varphi(n+1)}{n+1}\geq \liminf\limits_{n\to \infty}\frac{\Phi(n+1)}{n+1}>\log (b+\epsilon)>\log b,
\end{equation*}
which contradicts to $\log b=\liminf\limits_{n\to \infty}\frac{\varphi(n)}{n}.$

Now we begin to prove \eqref{sequencec-n}.
If  the equality \eqref{sequencec-n-1} holds for some $n$ such that $\Phi(n)\neq\varphi(n)$, then $\Phi(n)=\Phi(n+1)$, and the equality \eqref{sequencec-n-1} holds for $n+1$, since
\begin{equation*}
\begin{aligned}
&\frac{e^{\Phi(n+1)}}{c^{t_0}_{n+1}c^{t_1}_{n+2}\cdots c^{t_{m-1}}_{n+m}}=\frac{e^{\Phi(n)}}{c^{t_0}_{n+1}c^{t_1}_{n+2}\cdots c^{t_{m-1}}_{n+m}}=c^{t_0}_{n}c^{t_1-t_0}_{n+1}\cdots c^{t_m-t_{m-1}}_{n+m}\\
\leq&{ \left(c_1c_2\cdots c_{n-1}\right)}^{t_0(b+\epsilon-1)} {\left(c_1c_2\cdots c_{n}\right)}^{(t_1-t_0)(b+\epsilon-1)}\\
&\cdots {\left(c_1c_2\cdots c_{n+m-1}\right)}^{(t_m-t_{m-1})(b+\epsilon-1)}\\
=&{(c_1c_2\cdots c_{n-1})}^{t_m(b+\epsilon-1)}c_n^{(t_m-t_0)(b+\epsilon-1)}\cdots c_{n+m-1}^{(t_m-t_{m-1})(b+\epsilon-1)}\\
<&{(c_1c_2\cdots c_{n+m-1})}^{t_m(b+\epsilon-1)}.
\end{aligned}
\end{equation*}
By the fact that $\Phi(n)=\varphi(n)$ for infinitely many $n\in\mathbb{N}$, we can repeat this argument until we get to some $n+k$ such that $\Phi(n+k)=\varphi(n+k)$. Then the desired result is obtained.

Combining \eqref{sequencecn} with \eqref{sequencec-n}, we have
\begin{equation}\label{sequence-5}
\limsup\limits_{n\to\infty}\dfrac{\log\left(c^{t_0}_nc^{t_1}_{n+1}\cdots c^{t_m}_{n+m}\right)}{\varphi(n)}=1.
\end{equation}

\textbf{Step \uppercase\expandafter{\romannumeral2}.}  We use the sequence $\{c_n\}_{n\geq1}$ to construct a subset of $\overline{E}(\{t_i\}_{i=0}^m,\varphi).$

By $\varphi(n)/n\to \infty$ as $n\to \infty$, we choose an increasing sequence $\{n_k\}_{k=1}^{\infty}$ such that for each $k\geq 1$
\begin{equation*}
\frac{\varphi(n)}{n}\geq k^2,\ \text{when} \ n\geq n_k.
\end{equation*}
Let $\alpha_n=2$ if $1\leq n<n_1$ and
$$\alpha_n=k+1,\ \text{when}\ n_k\leq n<n_{k+1}.$$
For any $n\geq 1$, there exists $k(n)$ such that $n_{k(n)}\leq n+m<n_{k(n)+1}$. Then
\begin{equation}\label{sequence-alpha1}
\lim\limits_{n\to \infty}\dfrac{\log\left(\alpha^{t_0}_n\alpha^{t_1}_{n+1}\cdots\alpha^{t_m}_{n+m}\right)}{\varphi(n)}\leq \lim\limits_{n\to\infty}\dfrac{(t_0+\cdots+t_m)\log(k(n)+1)}{n{k(n)}^2}=0,
\end{equation}
and
\begin{equation}\label{sequence-alpha2}
\lim\limits_{n\to\infty}\dfrac{\log \alpha_{n+1}}{\log(\alpha_1\alpha_2\cdots\alpha_n)}\leq \lim\limits_{n\to\infty}\dfrac{\log(n+1)}{n\log2}=0.
\end{equation}
For any $n\geq1,$ take $s_n=c_n+\alpha_n$. Then  we have $s_n\to\infty$ as $n\to\infty$.

Define
\begin{equation*}
E(\{s_n\}_{n\geq1})=\left\{x\in[0,1):\lfloor s_n\rfloor\leq a_n(x)<2\lfloor s_n\rfloor,\ \text{for any}\ n\geq1\right\}.
\end{equation*}
Since $c_n\geq 1$ and $\alpha_n\geq2$ for all $n\geq1$, we can check that for any $n\geq1$
$$\log c_n\leq \log s_n\leq \log c_n +2\log \alpha_n.$$
Combining \eqref{sequence-5}, \eqref{sequence-alpha1}, \eqref{sequence-alpha2}, we get
\begin{equation*}
\limsup\limits_{n\to\infty}\dfrac{\log \left(s^{t_0}_ns^{t_1}_{n+1}\cdots s^{t_m}_{n+m}\right)}{\varphi(n)}=1.
\end{equation*}
So $E(\{s_n\}_{n\geq1})\subset\overline{E}(\{t_i\}_{i=0}^m,\varphi).$
Applying Lemma \ref{anyset}, we obtain
\begin{equation*}
\hdim \overline{E}(\{t_i\}_{i=0}^m,\varphi)\geq \hdim E(\{s_n\}_{n\geq1})=\dfrac{1}{2+\limsup\limits_{n\to\infty}\frac{\log s_{n+1}}{\log (s_1s_2\cdots s_n)}}\overset{\eqref{limsupb-1}}\geq \frac{1}{b+1+\epsilon}.
\end{equation*}
Therefore,
$$\hdim\overline{E}(\{t_i\}_{i=0}^m,\varphi)\geq \frac{1}{b+1}.$$


\section*{Acknowledgements} A. Bakhtawar is supported by the Australian Research Council Discovery Project (ARC Grant
DP180100201) and J. Feng is supported by the National Natural Science Foundation of China (NSFC Grant No. 11901204). J. Feng would like to thank China Scholarship Council financial support (No. 202106160053). The authors are grateful to  Professor Lingmin Liao for helpful discussions.

\section*{References}


\begin{thebibliography}{10}

\bibitem{Ba_20}
A.~Bakhtawar.
\newblock Hausdorff dimension for the set of points connected with the
  generalized {J}arn\'{\i}k-{B}esicovitch set.
\newblock {\em J. Aust. Math. Soc.}, 112(1):1--29, 2022.

\bibitem{BaHuKlWa_22}
A.~{Bakhtawar}, M.~{Hussain}, D.~{Kleinbock}, and B.-W.~{Wang}.
\newblock Metrical properties for the weighted products of multiple partial
  quotients in continued fractions.
\newblock {\em Pre-Print: https://arxiv.org/abs/2202.11212}, 2022.

\bibitem{Be_12}
F.~Bernstein.
\newblock \"{U}ber eine {A}nwendung der {M}engenlehre auf ein aus der {T}heorie
  der s\"{a}kularen {S}t\"{o}rungen herr\"{u}hrendes {P}roblem.
\newblock {\em Math. Ann.}, 71(3):417--439, 1911.

\bibitem{Bo_12}
E.~Borel.
\newblock Sur un probl\`eme de probabilit\'{e}s relatif aux fractions
  continues.
\newblock {\em Math. Ann.}, 72(4):578--584, 1912.

\bibitem{DaSc_70}
H.~Davenport and W.-M. Schmidt.
\newblock Dirichlet's theorem on diophantine approximation.
\newblock In {\em Symposia {M}athematica, {V}ol. {IV} ({INDAM}, {R}ome,
  1968/69)}, pages 113--132. Academic Press, London, 1970.

\bibitem{FaLiWaWu_13}
A.~Fan, L.-M.~Liao, B.-W.~Wang, and J.~Wu.
\newblock On the fast {K}hintchine spectrum in continued fractions.
\newblock {\em Monatsh. Math.}, 171(3-4):329--340, 2013.

\bibitem{FaLiWaWu_09}
A.-H. Fan, L.-M. Liao, B.-W. Wang, and J.~Wu.
\newblock On {K}hintchine exponents and {L}yapunov exponents of continued
  fractions.
\newblock {\em Ergodic Theory Dynam. Systems}, 29(1):73--109, 2009.

\bibitem{FaMaSo_21}
L.-L.~Fang, J.~Ma, and K.-K.~Song.
\newblock Some exceptional sets of {B}orel-{B}ernstein theorem in continued
  fractions.
\newblock {\em Ramanujan J.}, 56(3):891--909, 2021.

\bibitem{FeWULiTs_97}
D.-J. Feng, J.~Wu, J.-C. Liang, and S.~Tseng.
\newblock Appendix to the paper by {T. } {{\L}uczak}---a simple proof of the
  lower bound: ``{O}n the fractional dimension of sets of continued
  fractions''.
\newblock {\em Mathematika}, 44(1):54-55, 1997.

\bibitem{FeXu_21}
J.~Feng and J.~Xu.
\newblock Sets of {D}irichlet non-improvable numbers with certain order in the
  theory of continued fractions.
\newblock {\em Nonlinearity}, 34(3):1598--1611, 2021.

\bibitem{Go_41}
I.-J. Good.
\newblock The fractional dimensional theory of continued fractions.
\newblock {\em Proc. Cambridge Philos. Soc.}, 37:199--228, 1941.

\bibitem{HuaWu_19}
L.-L.~Huang and J.~Wu.
\newblock Uniformly non-improvable {D}irichlet set via continued fractions.
\newblock {\em Proc. Amer. Math. Soc.}, 147(11):4617--4624, 2019.

\bibitem{HuWuXu_19}
L.-L.~Huang, J.~Wu, and J.~Xu.
\newblock Metric properties of the product of consecutive partial quotients in
  continued fractions.
\newblock {\em Israel J. Math.}, 238(2):901--943, 2020.

\bibitem{Ja_32}
V.~Jarnik.
\newblock Zur {T}heorie der diophantischen {A}pproximationen.
\newblock {\em Monatsh. Math. Phys.}, 39(1):403--438, 1932.

\bibitem{Kh_63}
A.-Y. Khintchine.
\newblock Continued Fractions.
\newblock {\em University of Chicago Press, Chicago, London, } 1964.

\bibitem{KlWa_18}
D.~Kleinbock and N.~Wadleigh.
\newblock A zero-one law for improvements to {D}irichlet's {T}heorem.
\newblock {\em Proc. Amer. Math. Soc.}, 146(5):1833--1844, 2018.

\bibitem{Luczak}
T.~\L uczak.
\newblock On the fractional dimension of sets of continued fractions.
\newblock {\em Mathematika}, 44(1):50--53, 1997.

\bibitem{LiRa_016}
L.-M.~Liao and M.~Rams.
\newblock Subexponentially increasing sums of partial quotients in continued
  fraction expansions.
\newblock {\em Math. Proc. Cambridge Philos. Soc.}, 160(3):401--412, 2016.

\bibitem{LiRa_16}
L.-M.~Liao and M.~Rams.
\newblock Upper and lower fast {K}hintchine spectra in continued fractions.
\newblock {\em Monatsh. Math.}, 180(1):65--81, 2016.

\bibitem{WaWu_008}
B.-W. Wang and J.~Wu.
\newblock Hausdorff dimension of certain sets arising in continued fraction
  expansions.
\newblock {\em Adv. Math.}, 218(5):1319--1339, 2008.

\bibitem{Zh_20}
L.-L.~Zhang.
\newblock Set of extremely Dirichlet non-improvable points.
\newblock {\em Fractals}, 28(02):2050034, 2020.

\end{thebibliography}
\end{document}